\newtheorem{thm}{Theorem}[section]
\newtheorem{lem}[thm]{Lemma}
\newtheorem{dfn}[thm]{Definition}
\newtheorem{prop}[thm]{Proposition}
\newtheorem{cor}[thm]{Corollary}
\newtheorem{obs}[thm]{Observation}
\newcommand{\Z}{\operatorname{Z}}
\newcommand{\F}{\operatorname{F}}
\newcommand{\specialt}{\operatorname{star}_{2,2,2}}
\definecolor{lightgray1}{rgb}{0.85, 0.85, 0.85}
\definecolor{darkgray1}{rgb}{0.45, 0.45, 0.45}
\definecolor{grn3}{rgb}{0,0.35,0}
\begin{document}

\title{Well-failed graphs}
\author{Bonnie C. Jacob}
\maketitle

\abstract{
In this paper we begin the study of well-failed graphs, that is, graphs in which every maximal failed zero forcing set is a maximum failed zero forcing set, or equivalently, in which every minimal fort is a minimum fort.  We characterize trees that are well-failed.  Along the way, we prove that the set of vertices in a graph that are not in any minimal fort is identical to the set of vertices that are in no minimal zero forcing set, which allows us to characterize vertices in a tree that are in no minimal fort.  
}

\section{Introduction}

Over the past decade and a half, interest in the area of zero forcing has boomed.  Introduced because of its applications to combinatorial matrix theory \cite{aim2008zero} and control of quantum systems \cite{burgarth2009local, burgarth2013zero}, 
zero forcing picked up interest since its introduction and led to the introduction of several related concepts, including failed zero forcing, forts, and zero blocking, which are intimately connected to each other and are all addressed in this paper.  More information on the background of zero forcing can be found in \cite{hogben2022inverse}.  

A separate graph theoretic concept is the idea of well-$X$ graphs, where $X$ is a particular type of set.  A \emph{well-$X$} graph is a graph in which every minimal (or maximal, depending on $X$) $X$ set has the same cardinality.  For example, well-covered graphs are graphs in which every maximal independent set has the same cardinality \cite{hartnell1999well, plummer1993well}, and well-dominated graphs are graphs in which every minimal dominating set has the same cardinality \cite{anderson2021well, finbow1988well}.  

In \cite{grood2024well}, the authors applied the ``well'' idea to zero forcing.  In this paper,  we introduce well-failed graphs, which are graphs in which every maximal failed zero forcing set is maximum.  In doing so, we answer equivalent questions related to other parameters, most notably which graphs have the property that every minimal fort is minimum.  We characterize trees that are well-failed, and along the way, characterize vertices in a tree that are in no minimal fort, or equivalently, in every maximal failed zero forcing set. We establish that in any graph the set of vertices that are in every maximal failed zero forcing set is the same as the set of vertices that are in no minimal zero forcing set.  

\subsection{Definitions and notation}
Throughout this paper, we assume all graphs are simple (no loops or multiple edges), undirected, and finite.    Given a graph $G$, the vertex set is denoted $V(G)$ and the edge set $E(G)$.  If $uv \in E(G)$ for $u, v \in V(G)$, then we say that $u$ is a \emph{neighbor} of $v$, and vice versa. The \emph{open neighborhood} of $v$, denoted $N(v)$, is the set of neighbors of $v$, and the \emph{closed neighborhood} of $v$, denoted $N[v]$ is defined as $N[v]=N(v) \cup \{v\}$.  The \emph{degree} of a vertex $v$ is defined as $\deg(v)=|N(v)|$. For other standard graph theory terminology the reader may consult  \cite{CZ}.

A {\em pendent vertex} or \emph{pendant} is a vertex with degree one.  We refer to a vertex $v$ in a tree as a \emph{high-degree vertex} if $\deg(v) \geq 3$.  If $v, v'$ are pendants with common neighbor $w$, then we refer to $v$ and $v'$ as \emph{double pendants} and say that $w$ \emph{has a double pendant}.  

While there are many variations of zero forcing, this paper focuses on standard zero forcing, first formally introduced in \cite{aim2008zero}.  In standard zero forcing, hereafter referred to as simply ``zero forcing,'' a subset $S$ of the vertex set $V(G)$ is colored blue, and $V(G) \backslash S$ is colored white.  The following color change rule is then applied: if any blue vertex has exactly one white neighbor, then the white neighbor will change to blue.  The color change rule is applied repeatedly until no color changes are possible.  We call the set of blue vertices in the graph after no more color changes are possible the \emph{closure} of $S$, denoted $cl(S)$.  If $cl(S)=V(G)$, then  $S$ is a \emph{zero forcing set}.  The cardinality of a smallest zero forcing set is $\Z(G)$, the \emph{zero forcing number} of the graph.  

Failed zero forcing and stalled sets were first introduced  in  \cite{fetcie2015failed}.  A \emph{failed zero forcing set}, or for short, a \emph{failed set} is a set of vertices that is not a zero forcing set.  A \emph{stalled set} is a set of vertices $S$ such that $S \subsetneq V(G)$ and no color changes are possible from $S$.  The maximum cardinality among all failed sets is called the \emph{failed zero forcing number} of the graph, and is denoted $\F(G)$.  A \emph{maximum failed zero forcing set} is a failed zero forcing set that is of largest cardinality among all failed  sets of $G$.  That is, a maximum failed set $S$ is a failed set of $G$ with $|S|=\F(G)$.  Note that every stalled set is a failed set, but not the other way around.  Also, every maximum failed set is a stalled set.

\begin{dfn}
A \emph{maximal failed set} is a failed set $S$ such that adding any vertex from $V(G) \backslash S$ to $S$ produces a set that is not a failed set.  \end{dfn}

A maximum failed set is a maximal failed set but not the other  way around.  Also, every maximal failed set is a stalled set.

A \emph{fort}, first formally introduced in  \cite{fast2017novel, fast2018effects}, is a nonempty set of vertices $W$ such that $|N(v) \cap W| \neq 1$ for any vertex $v$ with $v \in V(G) \backslash W$.  That is, a fort is the complement of a stalled set, though forts and stalled sets were introduced independently.  For forts and stalled sets, we use minimality and maximality differently than we do for failed sets.  

\begin{dfn}
A \emph{maximal stalled set} is a stalled set that is   not a proper subset of a stalled set.  Similarly, a \emph{minimal fort} is a fort that does not contain another fort as a proper subset.  \end{dfn}

That is, for forts and stalled sets we use minimality and maximality with respect to inclusion.

A \emph{zero blocking set}, which is  the complement of a failed set, was defined \cite{beaudouin2020zero}.  We stay with the terminology of failed zero forcing and forts in this paper, but include mention of zero blocking sets for completeness.  A minimal zero blocking set is the complement of a maximal failed set: it is a zero blocking set such that if we remove any vertex from it, it is no longer a zero blocking set.  

\subsection{Preliminaries and basic results}
In this paper, we are interested in determining which graphs have the special property that every maximal failed set is a maximum failed set.  We could also ask on which graphs every minimal fort is also a minimum fort, or a similar question for stalled and zero blocking sets, but it turns out that these questions are actually all equivalent as shown in the following lemma.  

\begin{lem} \label{lem:equivalent}
The following are equivalent in a graph $G$ for $S \subseteq V(G)$.
\begin{enumerate}[label={(\arabic*)}]
\item  $S$ is a maximal stalled set \label{item:stalled}
\item $S$ is a maximal failed set  \label{item:failed}
\item $V \backslash S$ is a minimal fort. \label{item:fort}
\item $V \backslash S$ is a minimal zero blocking set.  \label{item:zeroblocking}
\end{enumerate}  
\end{lem}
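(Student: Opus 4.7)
The plan is to organize the four statements into two pairs related by set complementation, and then bridge the pairs through the single nontrivial equivalence (1) $\Leftrightarrow$ (2).

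I would first dispatch (1) $\Leftrightarrow$ (3) and (2) $\Leftrightarrow$ (4) as warm-ups. For the first, unpacking the color change rule shows that a proper subset $S \subsetneq V(G)$ is stalled precisely when every $u \in S$ satisfies $|N(u) \cap (V(G) \setminus S)| \neq 1$; comparing with the definition of fort yields that $S$ is stalled iff $V(G) \setminus S$ is a fort, and since complementation reverses inclusion, maximal stalled sets correspond to minimal forts. The equivalence (2) $\Leftrightarrow$ (4) is even more direct, since the zero blocking set was defined as the complement of a failed set, so maximality of one matches minimality of the other.

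The substantive step is (1) $\Leftrightarrow$ (2), and the key tool is the observation that whenever $T \subseteq V(G)$ is not a zero forcing set, its closure $cl(T)$ is by definition a stalled set. For the forward direction, every stalled set is failed, so $S$ itself is already failed; for any $v \in V(G) \setminus S$, I would consider $T = cl(S \cup \{v\})$, and if $T \neq V(G)$ then $T$ would be a stalled set strictly containing $S$, contradicting maximality. Hence $S \cup \{v\}$ must be a zero forcing set, proving $S$ is maximal failed. For the reverse direction, I would first show $S$ is stalled: if not, some $u \in S$ would have a unique white neighbor $v$, and then $cl(S \cup \{v\}) = cl(S) \neq V(G)$ would make $S \cup \{v\}$ a failed set strictly larger than $S$, contradicting maximality. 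Maximality as a stalled set then follows at once, since any stalled set properly containing $S$ would be failed and strictly larger, again contradicting maximality of $S$ as a failed set.

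The only real obstacle lies in the forward direction of (1) $\Rightarrow$ (2): maximality as a stalled set tells us only that adding a vertex breaks the no-forcing condition, while maximality as a failed set demands the stronger conclusion that forcing cascades all the way out to $V(G)$. The closure-is-stalled observation is exactly what bridges this gap, after which the remaining implications are bookkeeping.
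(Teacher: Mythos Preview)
Your proposal is correct and follows essentially the same route as the paper: both dispatch (1)$\Leftrightarrow$(3) and (2)$\Leftrightarrow$(4) by definition, then bridge via (1)$\Leftrightarrow$(2) using the observation that $cl(S\cup\{v\})$ is either $V(G)$ or a stalled set strictly containing $S$. If anything, your (2)$\Rightarrow$(1) direction is slightly more self-contained, since you explicitly verify that a maximal failed set is stalled rather than invoking it as a prior remark.
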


\begin{proof}
The equivalence of  \ref{item:stalled} and \ref{item:fort} is clear from the definitions, as is the equivalence of  \ref{item:failed} and \ref{item:zeroblocking}.

To show that \ref{item:stalled} implies \ref{item:failed}, suppose $S$ is a maximal stalled set.  Then any superset of $S$ other than $V(G)$ allows a color change including $S \cup \{v\}$ for any $v \in V(G) \backslash S$. Then $S \subsetneq cl(S \cup \{v\})$, so $cl(S \cup \{v\}) = V(G)$ since $S$ is a maximal stalled set, and $S$ is therefore a maximal failed set.  

To show that \ref{item:failed}   implies  \ref{item:stalled},  suppose that $S$ is a maximal failed set.  Adding any vertex to $S$ results in a zero forcing set.  Then any superset of $S$ is not stalled.  That is, $S$ is a maximal stalled set, completing the proof.    
\end{proof}

Thus, in determining which graphs have the property that every maximal failed set is a maximum failed set, we answer the analogous question for forts, zero blocking sets, and stalled sets.  We call a graph with this property a well-failed graph.  

\begin{dfn}
A \emph{well-failed graph} is a graph in which every maximal failed set is a maximum failed set.
\end{dfn} 

We can immediately note the following for graphs that contain a double pendant.  

\begin{obs}
Suppose $v, v'$ are double pendants in a graph $G$, and for some maximal failed set $S$, $v, v' \notin S$.  Then $S=V(G) \backslash \{v, v'\}$.   Equivalently, if $v, v' \in W$ for some minimal fort $W$, then $W=\{v, v'\}$.  \label{obs:doublependants}
\end{obs}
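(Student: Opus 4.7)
The plan is to prove the fort version directly and then invoke Lemma \ref{lem:equivalent} to translate to the failed-set version, since the two statements are declared equivalent.

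First I would show that $\{v,v'\}$ is itself a fort. Let $w$ denote the common neighbor of $v$ and $v'$. For any vertex $u \in V(G)\setminus\{v,v'\}$ with $u\neq w$, neither $v$ nor $v'$ is a neighbor of $u$ (since $v,v'$ are pendants whose only neighbor is $w$), so $|N(u)\cap\{v,v'\}|=0\neq 1$. For $u=w$, we have $|N(w)\cap\{v,v'\}|=2\neq 1$. Thus $\{v,v'\}$ satisfies the fort condition, and it is nonempty.

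Next, suppose $W$ is a minimal fort with $v,v'\in W$. Then $\{v,v'\}\subseteq W$, and by the previous step $\{v,v'\}$ is itself a fort, so minimality of $W$ forces $W=\{v,v'\}$. This proves the fort reformulation.

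Finally, for the failed-set formulation, suppose $S$ is a maximal failed set with $v,v'\notin S$. By Lemma \ref{lem:equivalent}, $V(G)\setminus S$ is a minimal fort, and it contains both $v$ and $v'$. Applying the fort version just proved gives $V(G)\setminus S=\{v,v'\}$, i.e., $S=V(G)\setminus\{v,v'\}$. I do not anticipate a real obstacle here — the only content is the direct verification that two pendants sharing a neighbor form a fort, after which both statements are immediate from Lemma \ref{lem:equivalent} and the definition of minimality.
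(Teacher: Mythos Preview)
Your argument is correct. The paper states this result as an observation without proof, and your verification---showing directly that $\{v,v'\}$ is a fort and then invoking minimality and Lemma~\ref{lem:equivalent}---is exactly the immediate argument the paper is implicitly relying on.
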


Because blue-white colorings of paths come up multiple times in this paper, we introduce the following term.  On a path $P_n$, label the vertices $v_1, v_2, \ldots v_n$ starting with either end vertex.  Color blue every vertex of the form $v_{2j}$ for any $j$ such that $2j < n$.  We call this the \emph{standard coloring} of a path.  

The following fact was proved in the language of failed zero forcing in \cite{fetcie2015failed}.
\begin{prop}
\cite{fetcie2015failed} The white vertices of the standard coloring of the path form a minimum fort.  
\end{prop}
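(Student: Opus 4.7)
The plan is to prove the result in two moves: first, verify directly that the white set $W$ of the standard coloring is a fort, and second, prove that every fort in $P_n$ has at least $|W|$ vertices.

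For the first move, I observe that the blue vertices are exactly the interior even-indexed vertices $v_{2j}$ with $2j<n$. For each such blue vertex, both neighbors $v_{2j-1}$ and $v_{2j+1}$ lie in $W$: odd-indexed vertices are white by construction, and when $n$ is even the vertex $v_n$ is also white. Hence $|N(v_{2j})\cap W|=2$, the fort condition holds at every non-fort vertex, and $W$ is a fort.

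For the lower bound, I would establish two structural facts about an arbitrary fort $W$ of $P_n$. First, $W$ must contain both endpoints: if $v_1\notin W$, then since $N(v_1)=\{v_2\}$ the fort condition at $v_1$ forces $v_2\notin W$; the fort condition at $v_2$ then forces $v_3\notin W$; propagating along the path gives $W=\emptyset$, contradicting that forts are nonempty. By symmetry $v_n\in W$. Second, between any two consecutive fort vertices the index gap is at most $2$: if $v_{i_s},v_{i_{s+1}}\in W$ with no fort vertex strictly between them and $i_{s+1}-i_s\ge 3$, then $v_{i_s+1}$ has neighbors $v_{i_s}\in W$ and $v_{i_s+2}\notin W$, so $|N(v_{i_s+1})\cap W|=1$, violating the fort condition.

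Listing the fort vertices as $v_{i_1},\dots,v_{i_k}$ with $i_1=1$ and $i_k=n$, these two facts give $n-1=\sum_{s=1}^{k-1}(i_{s+1}-i_s)\le 2(k-1)$, so $|W|=k\ge\lceil(n+1)/2\rceil$, which matches the size of the standard-coloring white set (namely $(n+1)/2$ for odd $n$ and $n/2+1$ for even $n$). I do not anticipate a serious obstacle; the only point requiring a little care is the parity bookkeeping at the right endpoint when $n$ is even, where $v_{n-1}$ and $v_n$ are two adjacent white vertices rather than fitting into the strict alternating pattern.
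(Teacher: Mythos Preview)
Your argument is correct. The verification that the white set is a fort is straightforward, and your lower-bound argument---forcing both endpoints into any fort and then bounding consecutive gaps by $2$ to obtain $n-1\le 2(k-1)$---is clean and complete. The ceiling computation $k\ge\lceil (n+1)/2\rceil$ matches the size of the standard-coloring white set in both parities.

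As for comparison with the paper: there is nothing to compare. The paper does not prove this proposition; it simply cites it from \cite{fetcie2015failed}, where it was established in the language of failed zero forcing (i.e., in terms of the maximum failed set $\F(P_n)=n-\lceil n/2\rceil-1$ for $n\ge 2$, whose complement is precisely your $W$). Your direct fort-based proof is arguably more transparent for the present paper's purposes, since it works entirely in the fort framework without translating through failed sets.

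One minor stylistic remark: the clause ``and when $n$ is even the vertex $v_n$ is also white'' in your fort verification is true but not actually needed there, since for even $n$ the rightmost blue vertex is $v_{n-2}$, whose right neighbor $v_{n-1}$ is already odd-indexed. The whiteness of $v_n$ matters only for the size count, not for checking the fort condition at blue vertices.
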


\begin{lem}
The path $P_n$ is well-failed if and only if $n \in \{1, 2, 3, 4, 6\}$.  \label{lem:path}
\end{lem}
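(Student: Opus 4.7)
The plan is to reduce the lemma to the classical notion of well-coveredness applied to an auxiliary path. Label $V(P_n) = \{v_1, \ldots, v_n\}$. The first step is to characterize the proper stalled subsets $S \subsetneq V(P_n)$; I claim these are precisely the independent sets in the subpath on the internal vertices $\{v_2, \ldots, v_{n-1}\}$. The argument is a propagation: a blue pendant has its unique neighbor white and would force, unless that neighbor is also blue; and a blue internal vertex with neighbors of different colors has exactly one white neighbor and would force. So for $S$ to be stalled, a blue pendant must have its neighbor blue, and any blue internal vertex must have both neighbors of the same color. Iterating, any blue pendant---or any pair of adjacent blue vertices---propagates blueness to every vertex along the path, contradicting $S \subsetneq V(P_n)$. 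Thus a proper stalled set contains no pendant and is independent among the internal vertices; conversely, any such set is stalled, since each blue vertex then has two white neighbors and cannot force.

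By Lemma~\ref{lem:equivalent}, $P_n$ is well-failed iff all maximal stalled sets have the same cardinality. The description above shows that for $n \geq 2$ the maximal stalled sets of $P_n$ correspond bijectively to the maximal independent sets of the subpath $P_{n-2}$ on the internal vertices, where $P_0$ is interpreted as the empty graph. Hence $P_n$ is well-failed iff $P_{n-2}$ is well-covered. The case $n = 1$ is immediate, since the only failed set of $P_1$ is $\emptyset$.

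It remains to classify which paths are well-covered. Direct inspection yields well-coveredness for $m \in \{0, 1, 2, 4\}$, and shows that $P_3$ admits maximal independent sets $\{u_2\}$ and $\{u_1, u_3\}$ of different sizes. For $m \geq 5$, I would exhibit a small maximal independent set $T$ by taking every third vertex starting from $u_2$, namely $\{u_2, u_5, u_8, \ldots\}$, augmented by $u_m$ when $m \equiv 1 \pmod 3$; one verifies $T$ is maximal with $|T| \leq (m+2)/3 < \lceil m/2 \rceil$, the latter being the size of the maximum independent set $\{u_1, u_3, u_5, \ldots\}$. Combining, $P_n$ is well-failed iff $n = 1$ or $n - 2 \in \{0, 1, 2, 4\}$, that is, iff $n \in \{1, 2, 3, 4, 6\}$.

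I expect the main obstacle to be presenting the propagation argument cleanly; once that structural description is in hand, the reduction to well-covered paths and the small-case verification are routine.
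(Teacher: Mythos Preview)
Your proposal is correct and takes a genuinely different route from the paper. The paper argues directly: it checks the small cases $n\le 4$ and $n=6$ by hand, and for $n=5$ or $n\ge 7$ it exhibits a single explicit maximal failed set $S'=\{v_3\}\cup S\setminus\{v_2,v_4\}$ (obtained by perturbing the standard coloring $S$) whose cardinality is one less than $|S|$. Your argument instead gives a complete structural description---the proper stalled sets of $P_n$ are exactly the independent sets of the internal subpath $P_{n-2}$---and then reduces the question to the classical classification of well-covered paths. What this buys you is a cleaner conceptual picture (the bijection with maximal independent sets explains \emph{why} the answer has the shape it does) and a uniform treatment of all $n$ at once; the cost is that you must carry out the well-covered-path classification, though this is routine. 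The paper's approach is shorter and more self-contained, but leaves the maximality of $S'$ to the reader and does not reveal the underlying independent-set structure.
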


\begin{proof}
For $n \in \{1,2\}$,  the only fort is $V(P_n)$.  For $n \in \{3,4\}$, the only forts consist of all vertices but one internal vertex.  For $n=6$, $\F(T)=2$.  For any internal vertex $v$, $\{v\}$ is a failed set, but is not maximal since there is at least one internal vertex at distance two from $v$ that we could add to produce a larger failed set.  Thus if  $n \in \{1, 2, 3, 4, 6\}$, $P_n$ is well-failed.

For $n=5$ or $n \geq 7$, let $S$ be the blue vertices of the standard coloring. The set $S'=\{v_3\} \cup S \backslash \{v_2, v_4\}$ is a maximal failed set with $|S'|=|S|-1$,  completing the proof. 
\end{proof}

While this paper focuses on well-failed graphs, we also find some relationships with well-forced graphs, which were introduced in \cite{grood2024well}.  A graph is \emph{well-forced} if every minimal zero forcing set is a minimum zero forcing set.  In \cite{grood2024well}, the authors also explored the related idea of \emph{zero-forcing-irrelevant} vertices, which were introduced in  \cite{bong2023isomorphisms} and are vertices that are in no minimal zero forcing set of the graph.  In \cite{bong2023isomorphisms}, the concept of irrelevant vertices was defined more generally in terms of $X$-sets, where an $X$-set satisfies a particular collection of axioms.  In this paper, we will discuss zero-forcing-irrelevant vertices, as well as \emph{fort-irrelevant vertices}, which are vertices that are in no minimal fort in the graph, and \emph{failed-zero-forcing-irrelevant vertices}, which are vertices that are in no maximal failed set.  However, it's important to note that forts and failed zero forcing sets do not satisfy the axioms in the definitions of $X$-sets, and therefore do not fall under the $X$-irrelevant umbrella from  \cite{bong2023isomorphisms}.

Star centers, called $B$-vertices when they were introduced in \cite{grood2024well}, are critical in characterizing both  well-forced trees and zero-forcing-irrelevant vertices in trees.  For any vertex $v$, let $L(v)$ denote the neighbors of $v$ with degree one, and let $L[v]=\{v\} \cup L(v)$.  If $G$ is a graph with a vertex $v$ such that $|L(v)| \geq 2$, then we can produce a new graph  $G' = G \backslash L[v]$.  We call the process of producing $G'$ from $G$ a \emph{star removal}.  We now define a star center.  
\begin{dfn}
In a graph $G$, define $B_0$ to be the set of vertices that have a double pendant.  Perform star removals on $G$ for all vertices in $B_0$ to create the graph $G_1$. Then $B_1$ is the set of vertices in $G_1$ that each have a double pendant.   Continuing, for $i \geq 1$, we construct $G_{i+1}$ by performing star removals on $G_i$ for all vertices in $B_i$.  The vertices in $B_0, B_1, B_2,$ etc. are said to be \emph{star centers} of $G$.  
\end{dfn}

An example of a tree with its star centers shaded, and the resulting forest after performing star removals for all star centers in $B_0$, is shown in Figure \ref{fig:bvertices}.

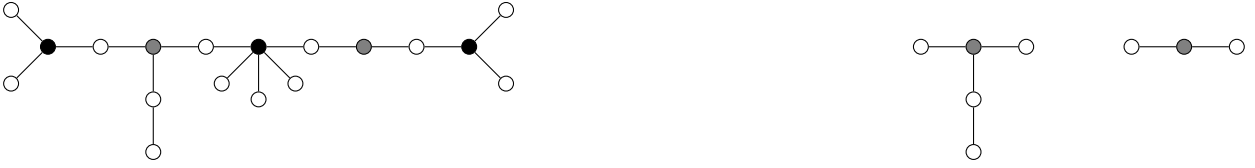
\begin{figure}
\begin{center}
\begin{tikzpicture}[auto, scale=0.7]
\tikzstyle{vertex}=[draw, circle, inner sep=0.7mm]
\node (v1) at (-0.7,0.7) [vertex]{};
\node (v2) at (-0.7,-0.7) [vertex]{};
\node (v3) at (0,0) [vertex, fill=black]{};
\node (v4) at (1,0) [vertex]{};
\node (v5) at (2,0) [vertex, fill=gray]{};
\node (v6) at (2,-1) [vertex]{};
\node (v7) at (2,-2) [vertex]{};
\node (v8) at (3, 0) [vertex]{};
\node (v9) at (4, 0) [vertex, fill=black]{};
\node (v10) at (3.3, -0.7) [vertex]{};
\node (v10b) at (4, -1) [vertex]{};
\node (v11) at (4.7, -0.7) [vertex]{};
\node (v12) at (5, 0) [vertex]{};
\node (v13) at (6, 0) [vertex, fill=gray]{};
\node (v14) at (7, 0) [vertex]{};
\node (v15) at (8, 0) [vertex, fill=black]{};
\node (v16) at (8.7, 0.7) [vertex]{};
\node (v17) at (8.7, -0.7) [vertex]{};
\draw (v1) -- (v3) -- (v2);
\draw (v3)--(v4)--(v5)--(v8)--(v9)--(v12)--(v13)--(v14)--(v15);
\draw(v5)--(v6)--(v7);
\draw(v10)--(v9)--(v10b);
\draw(v9)--(v11);
\draw(v16)--(v15)--(v17);
\end{tikzpicture}
\hfill
\begin{tikzpicture}[auto, scale=0.7]
\tikzstyle{vertex}=[draw, circle, inner sep=0.7mm]
\node (v4) at (1,0) [vertex]{};
\node (v5) at (2,0) [vertex, fill=gray]{};
\node (v6) at (2,-1) [vertex]{};
\node (v7) at (2,-2) [vertex]{};
\node (v8) at (3, 0) [vertex]{};
\node (v12) at (5, 0) [vertex]{};
\node (v13) at (6, 0) [vertex, fill=gray]{};
\node (v14) at (7, 0) [vertex]{};
\draw (v4)--(v5)--(v8);
\draw (v12)--(v13)--(v14);
\draw(v5)--(v6)--(v7);
\end{tikzpicture}
\caption{A tree $T$ with  $B_0$ and $B_1$ star centers shaded \textcolor{black}{black} and \textcolor{gray}{gray} respectively, and the forest $T_1$ after  all $B_0$ star removals have been performed on $T$.}
\label{fig:bvertices}
\end{center}
\end{figure}

\section{Irrelevant vertices}
In this section, we discuss fort-irrelevant vertices, that is, vertices that are not in any minimal fort.  Note that fort-irrelevant vertices are also the vertices that must be in every maximal failed set by Lemma \ref{lem:equivalent}. The main result of this section is Theorem \ref{thm:antifortimpliesirrelevant}, which shows that the set of fort-irrelevant vertices is the same as the set of zero-forcing-irrelevant vertices.  We also characterize failed-zero-forcing-irrelevant vertices in Corollary \ref{cor:fzfirrelevant}.  

The following fact was shown for zero forcing sets and forts in general \cite{brimkov2019computational} but we restate it applied to minimal zero forcing sets and minimal forts.

\begin{lem} \cite{brimkov2019computational} 
If $S$ is a minimal zero forcing set in $G$ and $W$ a minimal fort, then $S \cap W$ is nonempty. \label{lem:minimalzfsintersectminimalfort}
\end{lem}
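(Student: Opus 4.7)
The plan is to argue by contradiction, exploiting the duality between forts and stalled sets that the paper has already highlighted: $W$ is a fort if and only if $V(G) \backslash W$ is a stalled set.

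Suppose toward a contradiction that $S \cap W = \emptyset$, so that $S \subseteq V(G) \backslash W$. The first step is to observe that $V(G) \backslash W$ is itself stalled: by the definition of a fort, for every $v \in V(G) \backslash W$ we have $|N(v) \cap W| \neq 1$, which is exactly the statement that when $V(G) \backslash W$ is colored blue, no blue vertex has a unique white neighbor, so no color change is possible.

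The second step is to conclude that $S$ cannot reach $W$. Since $S \subseteq V(G) \backslash W$ and the color change rule is monotone in the blue set, $cl(S) \subseteq cl(V(G) \backslash W) = V(G) \backslash W$, where the final equality uses that $V(G) \backslash W$ is stalled. As $W$ is nonempty by the definition of a fort, $cl(S) \neq V(G)$, contradicting the assumption that $S$ is a zero forcing set. If one prefers not to invoke monotonicity, the same conclusion falls out by examining the first vertex of $W$ ever forced in a derivation from $S$: its forcer, being in $V(G) \backslash W$ with all other neighbors already blue (hence in $V(G) \backslash W$), would have exactly one neighbor in $W$, violating the fort condition.

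There is no substantive obstacle here; the argument is a two-line consequence of the fort/stalled duality together with the monotonicity of the closure. It is worth remarking that neither minimality hypothesis is actually used — the conclusion holds for any zero forcing set $S$ and any fort $W$, which matches the form in which the result originally appears in \cite{brimkov2019computational}.
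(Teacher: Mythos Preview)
Your proof is correct. The paper does not supply its own proof of this lemma---it simply cites the result from \cite{brimkov2019computational}---so there is no in-paper argument to compare against; your observation that neither minimality hypothesis is needed is also accurate and matches the generality of the original source.
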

Suppose $\mathcal{B}(G)$ is a collection of forts in $G$.  A \emph{cover} of $\mathcal{B}(G)$ is a set $C \subseteq V(G)$ such that $C \cap W$ is nonempty for each fort $W \in \mathcal{B}(G)$.  The cover $C$ is a \emph{minimal cover} of $\mathcal{B}(G)$ if no subset of $C$ is also a cover of $\mathcal{B}(G)$.
The following proposition appeared in \cite[Proposition 2.8]{brimkov2022minimal} for minimal covers of \underline{all} forts (as opposed to minimal covers of \underline{minimal} forts).  

\begin{prop}
A set $S \in V(G)$ is a  minimal zero forcing set of $G$ if and only if it is also a minimal cover of minimal forts of $G$.  \label{prop:minimalzfsisminimalcover}
\end{prop}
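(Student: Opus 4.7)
The plan is to derive the proposition from a single underlying characterization: a set $T \subseteq V(G)$ is a zero forcing set if and only if $T$ meets every fort of $G$, and this in turn is equivalent to $T$ meeting every \emph{minimal} fort. Once this characterization is established, both implications of the proposition follow cleanly by combining it with Lemma \ref{lem:minimalzfsintersectminimalfort}.

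First I would prove the characterization. For one direction, if $T$ is a zero forcing set and $W$ is a fort disjoint from $T$, then $T \subseteq V(G) \backslash W$, which is a stalled (hence failed) set because the defining property of a fort blocks every color change out of $V(G) \backslash W$; but every superset of a zero forcing set is a zero forcing set, giving a contradiction. Conversely, if $T$ is not a zero forcing set, then $cl(T) \subsetneq V(G)$ is a stalled set, and its complement $V(G) \backslash cl(T)$ is a nonempty fort that is disjoint from $T$. The equivalence with covering only the minimal forts is then immediate: every fort contains a minimal fort (by finite descent, since $G$ is finite), so covering all minimal forts already covers every fort.

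For the forward implication of the proposition, assume $S$ is a minimal zero forcing set. Lemma \ref{lem:minimalzfsintersectminimalfort} says $S$ meets every minimal fort, so $S$ is a cover of the minimal forts of $G$. If some $S' \subsetneq S$ were also such a cover, then by the characterization $S'$ would be a zero forcing set, contradicting minimality of $S$. For the reverse implication, assume $S$ is a minimal cover of minimal forts. The characterization gives that $S$ is a zero forcing set. If some $S' \subsetneq S$ were itself a zero forcing set, then again by the characterization $S'$ would meet every minimal fort and hence be a smaller cover, contradicting minimality of $S$ as a cover.

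The main obstacle is really just pinning down the characterization cleanly: one must be careful to note that the complement of a stalled proper subset of $V(G)$ is genuinely a fort (it is nonempty and satisfies the $|N(v) \cap W| \neq 1$ condition precisely because no color change is possible), and that passing from covering all forts to covering only minimal forts costs nothing. After that, the two minimality arguments are each a single line of contrapositive, since the characterization converts the zero-forcing minimality condition into the covering minimality condition and back.
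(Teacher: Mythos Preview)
Your proof is correct and follows essentially the same route as the paper's. The only difference is that the paper cites \cite{brimkov2022minimal} for the key characterization (a set is a zero forcing set if and only if it covers all forts, equivalently all minimal forts), whereas you prove it directly; after that, both arguments convert the two minimality conditions into one another in the same contrapositive fashion.
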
  

\begin{proof}
Suppose $S$ is a minimal zero forcing set of $G$.  Then $S$ is a cover for the set of minimal forts of $G$  \cite{brimkov2022minimal}.  For any $S' \subsetneq S$, since $S$ is a minimal zero forcing set, $S'$ is not a zero forcing set.   Then $cl(S') \subsetneq V(G)$, and $cl(S')$ is a stalled set, giving us that $V(G) \backslash cl(S')$ is a fort.  Let $W \subseteq V(G) \backslash cl(S')$ be a minimal fort.  Then  $W  \cap S' = \emptyset$, and the set  $S'$ is not a cover of minimal forts. Hence, $S$ is a minimal cover of the set of minimal forts of $G$.  

For the reverse direction, suppose $S$ is a minimal cover of minimal forts of $G$.   Then $S$ is a zero forcing set  \cite{brimkov2022minimal}.  Suppose $S' \subsetneq S$ for some zero forcing set $S'$.  Since $S$ is a minimal cover, $S'$ is not a cover of minimal forts, and there exists a minimal fort $W$ such that $W \cap S'=\emptyset$, contradicting Lemma \ref{lem:minimalzfsintersectminimalfort}.  Hence, $S$ is a minimal zero forcing set, completing the proof. 
\end{proof}

We also establish that each vertex in a minimal fort must appear in some minimal zero forcing set. 

\begin{lem}
Suppose $v \in W$ for some minimal fort $W$ of $G$.  Then $v \in S$ for some minimal zero forcing set $S$.  \label{lem:inminimalfortinzfs}
\end{lem}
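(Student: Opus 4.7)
The plan is to invoke Proposition~\ref{prop:minimalzfsisminimalcover}, which reduces the task to exhibiting a minimal cover $S$ of the minimal forts of $G$ that contains $v$. Let $\mathcal{M}$ denote the collection of all minimal forts of $G$ and fix $W \in \mathcal{M}$ with $v \in W$.

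The crucial observation is that $\mathcal{M}$ is an antichain under inclusion: if $W' \subseteq W$ for two distinct $W',W \in \mathcal{M}$, then $W$ properly contains the fort $W'$, contradicting the minimality of $W$. In particular, for every $W' \in \mathcal{M}$ with $v \notin W'$ we have $W' \neq W$, hence $W' \not\subseteq W$, and so $W' \backslash W$ is nonempty. For each such $W'$, I select a vertex $x_{W'} \in W' \backslash W$.

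Now set $C = \{v\} \cup \{x_{W'} : W' \in \mathcal{M},\ v \notin W'\}$. By construction, $C$ meets every minimal fort: any $W'' \in \mathcal{M}$ containing $v$ is hit by $v$, and any $W''$ missing $v$ is hit by $x_{W''}$. Moreover, $v$ is the \emph{unique} element of $C$ lying in $W$, because each $x_{W'}$ was chosen from $V(G) \backslash W$. Shrink $C$ greedily to a minimal cover $S \subseteq C$ by discarding redundant vertices one at a time; throughout this process $v$ remains the only element of the current cover that lies in $W$, so $v$ is essential for covering $W$ and can never be removed. By Proposition~\ref{prop:minimalzfsisminimalcover}, $S$ is a minimal zero forcing set of $G$ containing $v$, as required.

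The main subtlety is not the covering argument itself but arranging that $v$ is genuinely essential in $C$ rather than merely contained in it; the antichain property of minimal forts is precisely what lets us choose the remaining cover vertices entirely outside $W$, guaranteeing essentiality and hence survival through the greedy reduction.
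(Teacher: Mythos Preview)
Your proof is correct and follows essentially the same approach as the paper's: both use the antichain property of minimal forts to select, for each minimal fort not containing $v$, a covering vertex lying outside the fixed fort $W$, so that $v$ is the unique element of the resulting cover in $W$ and hence survives any reduction to a minimal cover; Proposition~\ref{prop:minimalzfsisminimalcover} then finishes. The only cosmetic difference is that the paper first extracts a minimal cover of the forts missing $v$ and then adjoins $v$, whereas you build the full cover first and then shrink greedily.
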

\begin{proof}
If $v$ is in every minimal fort of $G$, then $\{v\}$ is a minimal cover of minimal forts of $G$, and hence by Proposition \ref{prop:minimalzfsisminimalcover}, $\{v\}$ is a minimal zero forcing set.

Otherwise, let $W_1, W_2, \ldots, W_s$ be the minimal forts of $G$.  Assume that $v \in W_1, W_2, \ldots, W_k$ for some $k$ with $1 \leq k < s$, and $v \notin W_{k+1}, W_{k+2}, \ldots W_s$.  Since $W_i$ is a minimal fort for each $i$, $1 \leq i \leq s$, we must have that $W_j \not \subseteq W_i$ for any $j \neq i$.  Hence, $W_j \backslash W_1$ is nonempty for each $j \geq 2$.  For each $i>k$, pick $w_i \in W_i \backslash W_1$. 
 Then $\{w_i : i > k\}$ forms a cover of $\{W_{k+1}, W_{k+2}, \ldots W_s\}$.  If it is not a minimal cover, let $X \subseteq \{w_i : i > k\}$ be a minimal cover of $W_{k+1}, W_{k+2}, \ldots W_s$.  Then $\{v\} \cup X$ is a minimal cover of $W_1, \ldots, W_s$, and by Proposition \ref{prop:minimalzfsisminimalcover} a minimal zero forcing set of $G$, completing the proof.  
\end{proof}

\begin{thm}
A vertex $v$ is fort-irrelevant if and only if it is zero-forcing-irrelevant.    \label{thm:antifortimpliesirrelevant} \end{thm}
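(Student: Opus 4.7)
The plan is to prove the two directions separately, with one direction being essentially a restatement of the previous lemma and the other a short deduction from Proposition \ref{prop:minimalzfsisminimalcover}.

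For the forward direction, I would argue by contrapositive. Suppose $v$ is not zero-forcing-irrelevant, so $v$ lies in some minimal zero forcing set $S$. By Proposition \ref{prop:minimalzfsisminimalcover}, $S$ is a minimal cover of the minimal forts of $G$. Minimality of the cover means that $S \setminus \{v\}$ fails to cover at least one minimal fort; pick such a minimal fort $W$, so that $W \cap (S \setminus \{v\}) = \emptyset$. Since $S$ itself is a cover, $W \cap S$ is nonempty, and the only way to reconcile this is $v \in W$. Hence $v$ is not fort-irrelevant.

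The reverse direction is simply the contrapositive of Lemma \ref{lem:inminimalfortinzfs}: if $v$ is not fort-irrelevant, then $v$ lies in some minimal fort, and the lemma delivers a minimal zero forcing set containing $v$, so $v$ is not zero-forcing-irrelevant.

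There is no real obstacle here; the substantive work has already been done in Proposition \ref{prop:minimalzfsisminimalcover} and Lemma \ref{lem:inminimalfortinzfs}. The only thing to watch is the bookkeeping in the forward direction, namely recording that $S$ being a \emph{minimal} cover is exactly what produces the fort $W$ containing $v$, rather than some fort missing $v$ entirely.
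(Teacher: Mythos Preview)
Your proof is correct and follows essentially the same route as the paper: both directions rest on Proposition~\ref{prop:minimalzfsisminimalcover} and Lemma~\ref{lem:inminimalfortinzfs}. Your forward direction just unpacks, via contrapositive, the step the paper states tersely as ``in no minimal fort, hence in no minimal cover of minimal forts''; the reverse direction is identical to the paper's.
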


\begin{proof}
Suppose $v$ is fort-irrelevant. Then it is in no minimal fort of $G$, and therefore in no minimal cover of minimal forts of $G$.  By Proposition \ref{prop:minimalzfsisminimalcover}, then it is in no minimal zero forcing set of $G$, and is therefore zero-forcing irrelevant.  Conversely, if $v$ is zero-forcing-irrelevant, then it is in no minimal zero forcing set of $G$.  By Lemma  \ref{lem:inminimalfortinzfs}, then $v$ is in no minimal fort of $G$, and is therefore fort-irrelevant.
\end{proof}

Theorem \ref{thm:antifortimpliesirrelevant} allows us to apply the results from \cite{grood2024well} about zero-forcing-irrelevant vertices to fort-irrelevant vertices, establishing the following corollaries.  

\begin{cor}
In a tree $T$, a vertex is fort-irrelevant if and only if it is a star center. \label{cor:antifort} 
\end{cor}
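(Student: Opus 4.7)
The plan is to deduce Corollary~\ref{cor:antifort} essentially by combining two ingredients: Theorem~\ref{thm:antifortimpliesirrelevant} from this paper, and the analogous characterization of zero-forcing-irrelevant vertices in trees from \cite{grood2024well}. The introduction to the section on star centers explicitly flagged that star centers characterize the zero-forcing-irrelevant vertices in trees (this is a theorem proved in \cite{grood2024well}), so once the fort-side and zero-forcing-side notions of irrelevance have been identified, the corollary should follow immediately.

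Concretely, I would write the proof in two sentences. First, apply Theorem~\ref{thm:antifortimpliesirrelevant} to any vertex $v \in V(T)$: $v$ is fort-irrelevant in $T$ if and only if $v$ is zero-forcing-irrelevant in $T$. Second, invoke the result from \cite{grood2024well} which asserts that a vertex in a tree is zero-forcing-irrelevant if and only if it is a star center. Chaining the two biconditionals gives the claim.

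Since this is a direct corollary, there is really no obstacle to overcome here: the substantive work has already been done in Theorem~\ref{thm:antifortimpliesirrelevant} (which bridges forts and zero forcing) and in the cited tree characterization from \cite{grood2024well}. The one point worth being careful about is to make sure the version of the tree characterization cited uses the same definition of ``star center'' (iterated peeling of vertices with double pendants, as given in the Definition preceding Figure~\ref{fig:bvertices}) as \cite{grood2024well} does; if the terminology or the iterative construction differs slightly, a brief remark confirming the equivalence of the two definitions would be needed before the citation is invoked.
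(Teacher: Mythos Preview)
Your proposal is correct and matches the paper's approach exactly: the paper states the corollary immediately after Theorem~\ref{thm:antifortimpliesirrelevant} with the remark that the theorem ``allows us to apply the results from \cite{grood2024well} about zero-forcing-irrelevant vertices to fort-irrelevant vertices,'' and gives no further proof. Your caution about the definition of star center is also already handled in the paper, which notes that star centers were ``called $B$-vertices when they were introduced in \cite{grood2024well}.''
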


\begin{cor}
In any graph $G$, every star center is fort-irrelevant.  \label{cor:starcenter}
\end{cor}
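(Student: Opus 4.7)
The plan is to deduce this corollary as an immediate consequence of Theorem \ref{thm:antifortimpliesirrelevant} combined with the analogous statement about zero-forcing-irrelevant vertices from \cite{grood2024well}. Specifically, \cite{grood2024well} establishes that in any graph every star center is zero-forcing-irrelevant (i.e., lies in no minimal zero forcing set). Since Theorem \ref{thm:antifortimpliesirrelevant} tells us that the set of fort-irrelevant vertices coincides with the set of zero-forcing-irrelevant vertices, the result follows in one line. This is precisely the strategy flagged in the paragraph introducing the corollary.

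For a self-contained direct argument, one could instead induct on the index $i$ with $v \in B_i$. The base case $i=0$ is short and uses only Observation \ref{obs:doublependants}: if $v \in B_0$ has pendant neighbors $p, p'$ and $W$ is a minimal fort containing $v$, then because $N(p)=N(p')=\{v\}$ the fort condition forces $p, p' \in W$; Observation \ref{obs:doublependants} then gives $W=\{p,p'\}$, contradicting $v \in W$.

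The inductive step is where I expect the main obstacle to lie: one must relate minimal forts of $G$ to minimal forts of the star-removed graph $G_i$, showing that a minimal fort of $G$ containing $v \in B_i$ would restrict to (or induce) a fort of $G_i$ containing $v$, contradicting the inductive hypothesis. Tracking what happens at each removed star center, in particular the interaction between the fort condition and the deleted pendants, is delicate in a general graph where the star centers need not be leaves of a tree. Because Theorem \ref{thm:antifortimpliesirrelevant} together with \cite{grood2024well} already encapsulates exactly this bookkeeping, my preferred write-up is the one-line deduction rather than a direct induction.
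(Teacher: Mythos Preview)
Your proposal is correct and matches the paper's approach exactly: the paper states this corollary without a separate proof, deriving it immediately from Theorem~\ref{thm:antifortimpliesirrelevant} together with the result in \cite{grood2024well} that every star center is zero-forcing-irrelevant. Your one-line deduction is precisely what the paper does, and your additional sketch of a direct inductive argument is extra material not present in the paper.
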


In addition to investigating which vertices are in no minimal fort, we are now able to determine which vertices appear in every minimal fort.  Note that a vertex is in every fort if and only if it is in every minimal fort.   
\begin{prop}
A vertex $v$ is in every (minimal) fort of $G$ if and only if it is an end vertex and $G$ is a path. 
\end{prop}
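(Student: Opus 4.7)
The plan is to prove both implications, with the key reduction being Proposition \ref{prop:minimalzfsisminimalcover}, which translates a condition about minimal forts into one about minimal zero forcing sets. I would first note that ``$v$ is in every minimal fort'' is equivalent to ``$v$ is in every fort,'' since any fort contains a minimal fort by finiteness of $V(G)$, so I can move between these formulations freely.

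For the reverse direction, suppose $G = P_n$ with vertices $v_1, \ldots, v_n$ and $v = v_1$. I would argue by contradiction: assume some fort $W$ omits $v_1$. Since $v_1 \notin W$ has the unique neighbor $v_2$, the fort condition $|N(v_1) \cap W| \neq 1$ forces $v_2 \notin W$. Inducting along the path, once $v_{i-1}, v_i \notin W$ with $i < n$, applying the fort condition at $v_i$ (whose neighbors in $P_n$ are $v_{i-1}$ and $v_{i+1}$) propagates $v_{i+1} \notin W$. Hence $W = \emptyset$, contradicting the nonemptiness of forts. A symmetric argument handles $v = v_n$.

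For the forward direction, suppose $v$ lies in every minimal fort of $G$. Then $\{v\}$ meets every minimal fort, so $\{v\}$ is a cover of the collection of minimal forts of $G$; its only proper subset $\emptyset$ fails to cover any nonempty fort, so $\{v\}$ is in fact a \emph{minimal} cover. Proposition \ref{prop:minimalzfsisminimalcover} then gives that $\{v\}$ is a minimal zero forcing set of $G$, so in particular $\Z(G) = 1$. I would then invoke the classical characterization that $\Z(G) = 1$ if and only if $G$ is a path. Thus $G$ must be a path; and since a single initial blue vertex interior to a path has two white neighbors and immediately stalls, $v$ is forced to be an end vertex.

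I do not anticipate any real obstacle here; the conceptual step is recognizing that ``$v$ is in every minimal fort'' is exactly the condition making $\{v\}$ a minimal cover of minimal forts, after which Proposition \ref{prop:minimalzfsisminimalcover} collapses the problem to the well-known classification of graphs with zero forcing number one.
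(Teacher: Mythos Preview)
Your proof is correct and follows essentially the same route as the paper: both arguments hinge on observing that ``$v$ lies in every minimal fort'' is exactly the statement that $\{v\}$ is a minimal cover of minimal forts, then invoking Proposition~\ref{prop:minimalzfsisminimalcover} together with the well-known fact that a graph has a singleton (minimal) zero forcing set if and only if it is a path with that vertex as an endpoint. The only difference is that the paper runs this chain of equivalences in both directions at once, whereas you handle the reverse implication by a direct propagation argument along the path; both are equally valid and of comparable length.
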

\begin{proof}
Let $v \in V(G)$ for some graph $G$.
Then $v$ is in every minimal fort of $G$ if and only if $\{v\}$ is a minimal cover of minimal forts of $G$.  By Proposition \ref{prop:minimalzfsisminimalcover}, the set $\{v\}$ is a minimal cover of minimal forts of $G$ if and only if $\{v\}$ is a minimal zero forcing set of $G$.  It is well known that the only graph with a minimal zero forcing set of order one is a path, and the set consists of one end vertex. 
\end{proof}

Note that a vertex is in every minimal fort if and only if it is in no maximal failed set, giving us the following characterization of failed-zero-forcing-irrelevant vertices, vertices that are in no maximal failed set.
\begin{cor}
A vertex $v$ is failed-zero-forcing-irrelevant if and only if $v$ is an end vertex and $G$ is a path.  \label{cor:fzfirrelevant}
\end{cor}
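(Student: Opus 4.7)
The plan is to reduce the statement immediately to the preceding proposition by exploiting the complementation correspondence given in Lemma \ref{lem:equivalent}. The point is that ``failed-zero-forcing-irrelevant'' is the dual notion to ``contained in every minimal fort,'' so no new graph-theoretic work is needed once that duality is made precise.

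First I would observe that $v$ is failed-zero-forcing-irrelevant precisely when $v \notin S$ for every maximal failed set $S$ of $G$, i.e., when $v \in V(G) \setminus S$ for every maximal failed set $S$. By Lemma \ref{lem:equivalent}, the sets of the form $V(G) \setminus S$ for maximal failed sets $S$ are exactly the minimal forts of $G$. So $v$ is failed-zero-forcing-irrelevant if and only if $v$ lies in every minimal fort of $G$.

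Next I would invoke the proposition immediately preceding the corollary, which characterizes exactly the vertices that appear in every (minimal) fort: such a vertex exists if and only if $G$ is a path, and in that case it must be an end vertex. Chaining the two equivalences gives the claimed characterization in both directions.

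I do not expect any real obstacle: the ``hard work'' has already been done in Lemma \ref{lem:equivalent} and the preceding proposition, and the proof amounts to pointing out that failed-zero-forcing-irrelevance is the set-complement reformulation of being in every minimal fort. The only thing to be careful about is writing the biconditional cleanly so that both directions are clearly deduced from the same chain of equivalences, rather than giving two separate arguments.
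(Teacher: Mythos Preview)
Your proposal is correct and mirrors the paper's own justification: the paper simply notes, via the complementation in Lemma~\ref{lem:equivalent}, that a vertex lies in every minimal fort iff it lies in no maximal failed set, and then reads off the corollary from the preceding proposition. There is no additional idea beyond this duality.
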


Since we established that a vertex is fort-irrelevant if and only if it is zero-forcing-irrelevant, and these are the main types of irrelevant vertices we use throughout the paper, hereafter we refer to fort-irrelevant and zero-forcing-irrelevant vertices as simply \emph{irrelevant} vertices.

\section{Leafy graphs and well-failed trees}
In this section, we characterize well-failed trees, and establish a large family of well-failed graphs.  
First, we introduce a particular kind of graph that we show is well-failed.  

\begin{dfn}
Let $G$ be a graph such that every vertex that is not a leaf has a double pendant.  Then we call $G$ a \emph{leafy graph}.  
\end{dfn}
An example of a leafy graph is shown in Figure \ref{fig:leafy}.

\begin{figure}[htbp]
\begin{center}
\begin{tikzpicture}[auto, scale=0.7]
\tikzstyle{vertex}=[draw, circle, inner sep=0.7mm]
\node (v1) at (-0.9, 0.7) [vertex]{};
\node(v1a) at  (-1.8, 1) [vertex]{};
\node(v1b) at  (-1.8, 0.4) [vertex]{};
\node (v2) at (-0.9, -0.7) [vertex]{};
\node(v2a) at  (-1.8, -0.3) [vertex]{};
\node(v2b) at  (-1.8, -0.7) [vertex]{};
\node(v2c) at  (-1.8, -1.1) [vertex]{};
\node (v3) at (0,0) [vertex]{};
\node(v3a) at  (-0.2, -0.8) [vertex]{};
\node(v3b) at  (0.2, -0.8) [vertex]{};
\node (v4) at (1,0) [vertex]{};
\node(v4a) at  (0.8, -0.8) [vertex]{};
\node(v4b) at  (1.2, -0.8) [vertex]{};
\foreach \x in {1, 2, 3, 4}
\draw (v\x) to (v\x a);
\foreach \x in {1, 2, 3, 4}
\draw (v\x) to (v\x b);
\draw (v1)--(v2)--(v3)--(v1);
\draw(v3)--(v4);
\draw(v2)--(v2c);
\end{tikzpicture}
\end{center}
\caption{A leafy graph.}
\label{fig:leafy}
\end{figure}
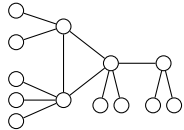

\begin{thm}
Every leafy graph is well-failed. \label{thm:leafy}
\end{thm}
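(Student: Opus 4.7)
The plan is to use Lemma \ref{lem:equivalent} to reduce the well-failed property to showing that every minimal fort of a leafy graph $G$ has the same cardinality; in fact, I aim to prove the sharper statement that every minimal fort has size exactly $2$, which immediately implies that every minimal fort is a minimum fort, hence $G$ is well-failed.

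The first step is to establish that no non-leaf vertex of $G$ lies in any minimal fort. By the definition of a leafy graph, every non-leaf vertex has a double pendant, so it lies in $B_0$ and is therefore a star center of $G$. Corollary \ref{cor:starcenter} then tells us that every star center is fort-irrelevant, and consequently every minimal fort $W$ of $G$ is contained in the set of leaves of $G$.

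The second step uses the fort condition together with Observation \ref{obs:doublependants} to pin down $|W|$. Pick any $\ell \in W$ and let $v$ be its unique neighbor. Since $v$ is a non-leaf, the first step gives $v \notin W$, and the defining condition of a fort then forces $|N(v) \cap W| \geq 2$. Because $W$ consists entirely of leaves, the two guaranteed neighbors of $v$ in $W$ are leaves adjacent to $v$, i.e., pendants of $v$, so $W$ contains a pair of double pendants. Observation \ref{obs:doublependants} then forces $W$ to equal that pair, giving $|W| = 2$.

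I do not expect a serious obstacle: the hard work is already packaged into Corollary \ref{cor:starcenter} and Observation \ref{obs:doublependants}. The only point requiring minor care is verifying that the two neighbors of $v$ in $W$ are genuinely pendants of $v$ (so that the hypothesis of Observation \ref{obs:doublependants} is met), but this is automatic: any leaf adjacent to $v$ has $v$ as its sole neighbor and is therefore a pendant of $v$ by definition.
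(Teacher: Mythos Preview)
Your proof is essentially the same as the paper's: both show that every minimal fort has exactly two vertices by first using star-center irrelevance to exclude non-leaves (the paper cites Theorem~\ref{thm:antifortimpliesirrelevant} directly, you cite its Corollary~\ref{cor:starcenter}) and then invoking Observation~\ref{obs:doublependants}. The one small gap is the unjustified assertion that the neighbor $v$ of $\ell$ is a non-leaf: this fails when $G$ has a $K_2$ component (in particular $G=K_2$ itself is vacuously leafy), a case the paper disposes of separately by observing that then $v\in W$ too and $W=\{v,\ell\}$.
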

\begin{proof}

If $G=K_2$, then the result is trivial.  Assume $|V(G)| \geq 3$.  By Theorem \ref{thm:antifortimpliesirrelevant}, every vertex in the graph that is not a leaf is an irrelevant vertex.  Thus, if $W$ is a minimal fort and $w \in W$, then $w$ is a leaf.  Let $v$ be the unique neighbor of $w$.  If $\deg(v)=1$, then $v \in W$ and $\{v, w\}$ is a minimal fort.  Otherwise, since $v \notin W$, $w \in W$,  and $W$ is a fort, the vertex $v$ must have at least one more neighbor $x \in W$, and like $w$, the vertex $x$ must be a leaf.  By Observation \ref{obs:doublependants}, $W = \{w,x\}$.  Since the choice of $w$ was arbitrary, every minimal fort in $G$ has two vertices.
\end{proof}

A \emph{generalized star} is a tree with exactly one high-degree vertex, which we refer to as the \emph{center} of the generalized star.  We refer to each pendent path from the vertex adjacent to the center to a leaf as a \emph{leg} of the generalized star.  

On a tree $T$ with at least one high-degree vertex, let $L_1$ and $L_2$ be two pendent paths from the same vertex $v$. Then the \emph{standard coloring of $T$ on $L_1$ and $L_2$} is the coloring of $T$ consisting of the standard coloring of the paths $L_1$ and $L_2$, starting with the vertices adjacent to $v$, and all other vertices in $T$ blue.  If $T$ is a generalized star with no legs of length one, then the \emph{adjusted coloring of $T$} is the coloring with the center of $T$ white, all neighbors of the center of $T$ blue, and then the standard labeling of a path on  each remaining subpath.



\begin{prop} Let $T$ be a tree with at least one high-degree vertex.  Let $L_1, L_2$ be pendent paths from the same high-degree vertex.  Then  the white vertices of the standard coloring of $T$ on $L_1$ and $L_2$ form a minimal fort on $T$.   \label{lem:standardgenstar}
\end{prop}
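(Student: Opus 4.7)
The plan is to verify directly that $W$, the set of white vertices, is a fort of $T$, and then check minimality by producing, for each $w \in W$, a witness vertex whose number of neighbors in $W$ drops to exactly one when $w$ is deleted. Let $v$ be the common high-degree vertex from which $L_1$ and $L_2$ emanate, let $n_i$ be the number of vertices of $L_i$, and label $L_i$ as $v_1^{(i)}, v_2^{(i)}, \ldots, v_{n_i}^{(i)}$ with $v_1^{(i)}$ the neighbor of $v$.

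To show $W$ is a fort, I would classify each $u \notin W$ by its location in $T$. If $u \notin L_1 \cup L_2$ and $u \neq v$, then since $L_1$ and $L_2$ attach to the rest of $T$ only through $v$, every neighbor of $u$ lies outside $L_1 \cup L_2$ and is therefore blue, giving $|N(u) \cap W| = 0$. If $u = v$, the only white neighbors of $v$ are $v_1^{(1)}$ and $v_1^{(2)}$, so $|N(v) \cap W| = 2$. If $u$ is a blue vertex on $L_i$, then $u = v_{2j}^{(i)}$ for some $j$ with $2j < n_i$, and its two neighbors $v_{2j-1}^{(i)}, v_{2j+1}^{(i)}$ have odd indices and are therefore white, giving $|N(u) \cap W| = 2$. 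In every case the fort condition is satisfied.

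For minimality, I would fix $w = v_k^{(i)} \in W$ and produce a witness $u \notin W \setminus \{w\}$ with $|N(u) \cap (W \setminus \{w\})| = 1$. If $k = 1$, take $u = v$; the two original white neighbors $v_1^{(1)}, v_1^{(2)}$ of $v$ are reduced to the single white neighbor $v_1^{(3-i)}$ after the removal. If $k$ is odd with $k \geq 3$, take $u = v_{k-1}^{(i)}$, which is blue because $k-1$ is even with $2 \leq k-1 < n_i$; its two original white neighbors $v_{k-2}^{(i)}$ and $v_k^{(i)} = w$ reduce to one after deletion. The only remaining case is that $k$ is even, which forces $k = n_i$ with $n_i$ even; here $v_{k-1}^{(i)}$ is itself white and so cannot serve as witness, and I instead take $u = w$, which is now blue and whose unique neighbor $v_{n_i - 1}^{(i)}$ is white and still lies in $W \setminus \{w\}$.

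The main obstacle is this last case: in a pendent path of even length, the standard coloring leaves the final two vertices both white, so the generic ``look one step back toward $v$'' witness from the previous case fails. The remedy is to notice that after deleting $w$ the vertex $w$ itself is blue and has exactly one white neighbor, supplying the required witness.
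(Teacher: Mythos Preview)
Your proof is correct. The verification that $W$ is a fort matches the paper's argument almost exactly. For minimality, however, you take a slightly different route: the paper argues dynamically, observing that recoloring any white vertex on $L_1$ blue causes all of $L_1$ to be forced, after which $v$ forces $L_2$, so the complement of $W\setminus\{w\}$ is a zero forcing set and (via Lemma~\ref{lem:equivalent}) $W\setminus\{w\}$ cannot be a fort. You instead give a static, case-by-case argument, exhibiting for each deleted $w$ an explicit vertex outside $W\setminus\{w\}$ with exactly one neighbor inside it. Your approach is more elementary and self-contained (it does not appeal to the forcing process or to Lemma~\ref{lem:equivalent}), at the cost of the extra case analysis---in particular your handling of the even-length leg endpoint, where the deleted vertex itself serves as the witness, is a nice touch that the paper's forcing argument absorbs without comment.
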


\begin{proof}
The only white vertices are those on legs $L_1$ and $L_2$.  Since the blue vertices on $L_1, L_2$ are in the standard labeling of a path, each is adjacent to exactly two white vertices.  The only other blue vertex in $T$ adjacent to a white vertex is the vertex $v$ from which $L_1, L_2$ are pendent paths, but the neighbors of $v$ on $L_1$ and $L_2$ are both white.   

 If we change any white vertex on, say, $L_1$ to blue, then all of $L_1$ will be forced, and $v$ will then force $L_2$.  Hence, the white vertices form a minimal fort.  \end{proof}

\begin{prop}
 If $T$ is a generalized star with no legs of length one,  then the white vertices of the adjusted coloring of $T$ form a minimal fort. \label{lem:adjusted}
\end{prop}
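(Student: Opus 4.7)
The plan is to verify that the white set $W$ of the adjusted coloring is a fort, and then to show it is minimal. I would label the vertices of each leg $i$ of $T$ as $u_1^{(i)}, u_2^{(i)}, \ldots, u_{\ell_i}^{(i)}$, with $u_1^{(i)}$ adjacent to the center $c$. By the definition of the adjusted coloring, $c$ is white, each $u_1^{(i)}$ is blue, and the remaining subpath $u_2^{(i)},\ldots, u_{\ell_i}^{(i)}$ carries the standard coloring of $P_{\ell_i-1}$ starting from $u_2^{(i)}$.

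For the fort property I would check that no blue vertex has exactly one white neighbor. Each $u_1^{(i)}$ has both its neighbors, $c$ and $u_2^{(i)}$, white, and every interior blue vertex of a standard-coloring subpath has both of its subpath-neighbors white by the alternating structure of the standard coloring. Hence $W$ is a fort.

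For minimality, for each $w \in W$ I would show that recoloring $w$ blue lets forcing cascade through all of $T$. If $w = c$, then each $u_1^{(i)}$ has only $u_2^{(i)}$ as a white neighbor and forces it, after which the alternating pattern propagates blue up each leg until the leaves are reached. If instead $w = u_k^{(j)}$ for some leg $j$ and some $k \geq 2$, I would invoke the cited fact that the white vertices of the standard coloring of a path form a minimum (hence minimal) fort. Since the restriction of the adjusted coloring to the subpath $u_2^{(j)},\ldots,u_{\ell_j}^{(j)}$ is precisely the standard coloring of $P_{\ell_j - 1}$, recoloring $w$ blue turns the subpath's blue set into a zero forcing set of that subpath, so the entire subpath becomes blue. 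Then $u_1^{(j)}$ has $c$ as its only white neighbor, forces $c$, and the argument for $w=c$ finishes the remaining legs.

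The main obstacle is confirming that the Case 2 cascade inside the subpath proceeds as in the isolated path: this holds because the only external neighbor of the subpath, $u_1^{(j)}$, is already blue, and the force from $u_1^{(j)}$ onto $c$ cannot happen until $u_2^{(j)}$ has first been forced blue from within the subpath. Once this is checked, the reduction to Case 1 completes the proof.
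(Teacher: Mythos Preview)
Your proof is correct and follows essentially the same route as the paper's: verify that the white set is a fort by checking each blue vertex has two white neighbors, then establish minimality by showing that recoloring the center triggers forcing down every leg, while recoloring a white leg-vertex forces its leg, then the center, then the remaining legs. The only cosmetic difference is that the paper phrases minimality as ``the blue vertices form a maximal failed set'' and invokes Lemma~\ref{lem:equivalent}, whereas you argue minimality of the fort directly and explicitly cite the minimum-fort property of the standard path coloring to justify the leg cascade.
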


\begin{proof}
If $T$ has no legs of length one, the white vertices of the adjusted coloring form a fort: the center vertex of $T$ is white, but each of its neighbors has another white vertex as its other neighbor.  The remainder of each leg is labeled with the standard labeling of the path.  The blue vertices form a maximal failed set: if we change the center vertex to blue, then the blue vertex adjacent to the center on each leg will force the entire leg.  If we change any white vertex on a leg to blue, then the entire leg will be forced blue, and then the center vertex, which will force the whole graph to be blue.   By Lemma \ref{lem:equivalent}, the white vertices form a minimal fort.  
\end{proof}

\begin{figure}[htbp]
\begin{center}
\begin{tikzpicture}[auto, scale=0.6]
\tikzstyle{vertex}=[draw, circle, inner sep=0.7mm]
\node(center) at (0,0) [vertex]{};
\node (v1a) at (1, 0)[vertex]{};
\node (v1b) at (2,0)[vertex]{};
\node (v2a) at (-0.7, 0.7)[vertex]{};
\node (v2b) at (-1.4, 1.4)[vertex]{};
\node (v3a) at (-0.7, -0.7)[vertex]{};
\node (v3b) at (-1.4, -1.4)[vertex]{};
\foreach \x in {1, 2, 3} {
\draw (center) to (v\x a);
\draw (v\x a) to (v\x b);
}
\end{tikzpicture}
\label{fig:specialt}
\caption{The only well-failed tree that has a high-degree vertex and is not leafy, $\specialt$}
\end{center}
\end{figure}

We refer to the tree shown in Figure \ref{fig:specialt} resulting from subdividing every edge of the star $K_{1,3}$ one time  as $\specialt$.  
\begin{lem}
The tree $\specialt$ is well-failed.\label{lem:specialgenstar}
\end{lem}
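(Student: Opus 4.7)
The plan is to enumerate the minimal forts of $\specialt$ and show they all have cardinality $4$, which by Lemma \ref{lem:equivalent} yields well-failedness. Label the vertices so that $c$ is the center, $m_1,m_2,m_3$ are its neighbors, and $\ell_i$ is the leaf adjacent to $m_i$. First, I would exhibit two minimal forts of size $4$ using the propositions just proved: Proposition \ref{lem:standardgenstar} applied to any two legs yields the minimal fort $\{m_i,\ell_i,m_j,\ell_j\}$, and Proposition \ref{lem:adjusted} yields the minimal fort $\{c,\ell_1,\ell_2,\ell_3\}$.

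Next, I would argue that every minimal fort $W$ falls into one of these forms. The key local constraints come from the degree-$1$ and degree-$2$ vertices. If $m_i \in W$ but $\ell_i \notin W$, then $\ell_i$ is a non-fort vertex with exactly one neighbor ($m_i$) in $W$, a contradiction; hence $m_i \in W \Rightarrow \ell_i \in W$. Now split on whether $c \in W$.

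If $c \in W$: for each $i$ with $m_i \notin W$, the vertex $m_i$ has neighborhood $\{c,\ell_i\}$, and to avoid $|N(m_i)\cap W|=1$ we need $\ell_i \in W$. Combined with the implication above, $\ell_i \in W$ for all $i$, so $W \supseteq \{c,\ell_1,\ell_2,\ell_3\}$. Since this containment set is already a fort, minimality forces $W = \{c,\ell_1,\ell_2,\ell_3\}$, of size $4$. If $c \notin W$: for each $i$ with $\ell_i \in W$, $m_i$ has neighborhood $\{c,\ell_i\}$ with $c \notin W$, so $|N(m_i)\cap W|\neq 1$ forces $m_i \in W$; combined with the earlier implication, $m_i \in W \Leftrightarrow \ell_i \in W$. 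Thus $W$ is a union of ``full legs'' $\{m_i,\ell_i\}$. Finally, the non-fort vertex $c$ requires $|N(c)\cap W|\in\{0,2,3\}$, i.e.\ $W$ consists of $0$, $2$, or $3$ full legs. The empty case is excluded (a fort is nonempty), and the $3$-leg case properly contains the $2$-leg fort $\{m_i,\ell_i,m_j,\ell_j\}$ and so is not minimal. Hence $W$ is a union of exactly two full legs, again of size $4$.

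The only step requiring any real care is the case analysis above, and specifically the two propagation rules $m_i \in W \Rightarrow \ell_i \in W$ and (when $c \in W$) $m_i \notin W \Rightarrow \ell_i \in W$; I do not anticipate a genuine obstacle, since the graph has only seven vertices and the degree constraints are very restrictive. Once the enumeration is complete, every minimal fort has exactly four vertices, so every maximal failed set has exactly three, and $\specialt$ is well-failed.
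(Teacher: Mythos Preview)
Your proof is correct and takes essentially the same approach as the paper: both identify that the minimal forts of $\specialt$ are exactly the ``two full legs'' forts $\{m_i,\ell_i,m_j,\ell_j\}$ from the standard coloring and the adjusted-coloring fort $\{c,\ell_1,\ell_2,\ell_3\}$, all of size four. The paper simply asserts this enumeration without justification, whereas you supply the explicit case analysis on whether $c\in W$; your version is therefore more complete, but the underlying argument is the same.
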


\begin{proof}
Let $v$ be the center of $T$.  The only maximal failed sets of $T$ are the blue vertices of the standard coloring of $T$ (on any pair of legs since all legs are of equal length) and the adjusted coloring.  Both resulting forts have four vertices.
\end{proof}

\begin{lem}
Suppose $T$ is a generalized star with three or more legs. Then $T$ is well-failed if and only if either $T=K_{1,n}$ or $T=\specialt$.  \label{lem:genstar_wellfailed}
\end{lem}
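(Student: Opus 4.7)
The plan is to prove each direction separately. The reverse direction is immediate: if $T=K_{1,n}$ then its only non-leaf, the center, has a double pendant, so $T$ is leafy and Theorem \ref{thm:leafy} gives that $T$ is well-failed; if $T=\specialt$ then Lemma \ref{lem:specialgenstar} applies directly. For the forward direction I argue the contrapositive: suppose $T$ is a generalized star with $k\geq 3$ legs $L_1,\dots,L_k$ of lengths $\ell_1,\dots,\ell_k$ and $T\notin\{K_{1,n},\specialt\}$, and exhibit two maximal failed sets of distinct sizes.

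The main tool is Proposition \ref{lem:standardgenstar}: for any pair $(i,j)$, the standard coloring on $L_i,L_j$ yields, via Lemma \ref{lem:equivalent}, a maximal failed set $S_{ij}$ of size $|V(T)|-\lceil\ell_i/2\rceil-\lceil\ell_j/2\rceil-2$, obtained by subtracting the $\lceil\ell_i/2\rceil+1$ white vertices on each designated leg. Since $T\neq K_{1,n}$, some $\ell_j\geq 1$. If some leg has length $0$ and some has length $\geq 1$, I compare sizes across carefully chosen pairs: when at least two legs are pendants, I compare $S_{ij}$ for two pendants (size $|V(T)|-2$) with $S_{i\ell}$ for a pendant paired with a non-trivial leg $L_\ell$; when exactly one leg is a pendant, so the other $k-1\geq 2$ legs are non-trivial, I compare $S_{1j}$ (pendant $+$ non-trivial) with $S_{jm}$ (two non-trivials). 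In each subcase the two sizes differ by $\lceil\ell/2\rceil\geq 1$ for some non-trivial length $\ell$, giving two maximal failed sets of different sizes.

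Otherwise every leg has length $\geq 1$, and I introduce the \emph{middle set} $M=\{m_1,\dots,m_k\}$, where $m_i$ denotes the neighbor of the center $v$ on $L_i$. I verify $M$ is a maximal failed set: it is stalled because each $m_i$ has exactly two white neighbors (namely $v$ and the next vertex along $L_i$); and adding any vertex from $V(T)\setminus M$ triggers a forcing chain (either $v$ turns blue so that each $m_i$ forces down its leg, or a white vertex inside a leg turns blue and forces back along that leg to $v$, which then forces the remaining legs) that colors all of $V(T)$ blue. Comparing $|M|=k$ with $|S_{12}|$ and using $|V(T)|=1+k+\sum_m\ell_m$, equality reduces to $\lfloor\ell_1/2\rfloor+\lfloor\ell_2/2\rfloor+\sum_{m\geq 3}\ell_m=1$. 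If $k\geq 4$, the tail alone is already $\geq k-2\geq 2$, so equality fails; if $k=3$, the equation together with $\ell_i\geq 1$ forces $\ell_1=\ell_2=\ell_3=1$, so $T=\specialt$, contradicting our assumption. Hence $|M|\neq |S_{12}|$, producing two maximal failed sets of different sizes.

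The main obstacle is verifying that the middle set $M$ is a maximal failed set, since $M$ does not fall directly under Proposition \ref{lem:standardgenstar} or Proposition \ref{lem:adjusted}; once $M$ is established, the remainder of the argument amounts to routine arithmetic on standard-coloring sizes.
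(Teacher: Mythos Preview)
There is a genuine gap in your second case. The claim that the middle set $M=\{m_1,\dots,m_k\}$ is a maximal failed set is false as soon as some leg has four or more vertices (in your convention, some $\ell_i\geq 3$). Take such a leg $L_i$ with vertices $m_i=a_1,a_2,a_3,a_4,\dots$ listed along the path away from the center, and set $w=a_3$. In $M\cup\{w\}$ the vertex $w$ has the two white neighbors $a_2$ and $a_4$, while each $m_j$ still has two white neighbors (the center $v$ and the second vertex on $L_j$, which for $j=i$ is $a_2$). Hence $M\cup\{w\}$ is stalled, so $M$ is not maximal failed. Your sentence ``a white vertex inside a leg turns blue and forces back along that leg to $v$'' is only valid when the added vertex is the leaf of the leg or is adjacent to $m_i$; an internal vertex at distance at least two from $m_i$ that is not the leaf cannot initiate any force. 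In particular, your verification of the ``main obstacle'' fails exactly in the regime you still need to cover (legs with $\ell_i\geq 3$, which is precisely where $T\neq\specialt$ can occur with all $\ell_i\geq 1$).

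The paper avoids this by using the adjusted coloring of Proposition~\ref{lem:adjusted} in place of the bare set $M$: one colors the center $v$ white, all $m_i$ blue, and then applies the standard path coloring on the remaining subpath of each leg. Proposition~\ref{lem:adjusted} certifies that the white set is a minimal fort, so its complement really is a maximal failed set, and one then compares its size with the standard coloring on two shortest legs. Your first case (some leg a single pendant, some longer) and your overall arithmetic framework are sound; replacing $M$ by the blue set of the adjusted coloring is what is needed to repair the argument.
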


\begin{proof}
By Theorem \ref{thm:leafy} and Lemma \ref{lem:specialgenstar}, if $T=K_{1,n}$ or $T=\specialt$, then $T$ is well-failed.  
Assume that $T \notin \{ K_{1,n}, \specialt \}$.  Suppose $T$ has a leg of length one. Since $T$ is not a star, it must have at least one longer leg as well.  Let $L_1, L_2$ be  two shortest legs, and let $L_3, L_4$ be two longest, noting that $L_3=L_2$ if $T$ has only three legs.  Let $W$ be the white vertices of the standard coloring on legs $L_1, L_2$, and $W'$ be the white vertices of the standard coloring on legs $L_3, L_4$.  Then $W$ and $W'$ are  minimal forts by Proposition \ref{lem:standardgenstar}, and $|W|<|W'|$, so $T$ is not well-failed.

We can assume for the remainder of the proof that all legs have length at least two.    Let $W$ again be the white vertices of the standard coloring on two shortest legs $L_1, L_2$.  Let $\hat{S}$ be the white vertices of the adjusted coloring on $T$.  Then $W,  \hat{W}$ are minimal forts by Propositions \ref{lem:standardgenstar} and \ref{lem:adjusted}.  We now show that $|W|<|\hat{W}|$ to complete the proof. Let $T'$ be the subgraph of $T$ induced by the center of $T$ and vertices of $L_1, L_2$.  Then $|V(T') \cap W|-1 \leq |V(T') \cap \hat{W}| \leq |V(T') \cap W|+1$.    However, $T$ has at least one leg $L'$ distinct from $L_1$ and $L_2$ that has length $k \geq 2$.  Note that if $L'$ is the only such leg, that the length of $L'$ is at least three, since otherwise $T=\specialt$.  We can see that $|W|<|\hat{W}|$ since $|\hat{W} \backslash V(T')| \geq 2$ and $|\hat{W} \backslash V(T')|=0$.  Thus, $T$ is not well-failed. 
\end{proof}

A \emph{pendent generalized star} in a graph $G$ is an induced subgraph $R$ of $G$ such that exactly one vertex $v$ of $R$ is high degree in $G$; for some $k \geq 2$, $G-v$ has $k+1$ components, exactly $k$ of which are pendent paths of $v$; and $R$ is induced by $v$ and the vertices of the $k$ pendent paths, which we refer to as \emph{legs}.  We call $v$ the \emph{center} of the pendent generalized star.  

It is well-known that every tree with at least two high-degree vertices has a pendent generalized star, and a formal proof was provided in \cite{fallat2007minimum}.  The same proof can be modified to show the following.  We include the modification to the proof for completeness.  
\begin{lem}
Every tree with at least two high-degree vertices has at least two pendent generalized stars.  \label{lem:twopgs}
\end{lem}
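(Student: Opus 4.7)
The plan is to exhibit two distinct high-degree vertices of $T$ each serving as the center of a pendent generalized star, by choosing them as the endpoints of a longest path between high-degree vertices of $T$.

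First I would observe the following local fact: if $v$ is a high-degree vertex of $T$ and at most one component of $T - v$ contains another high-degree vertex, then $v$ is the center of a pendent generalized star. Indeed, in any component $C$ of $T - v$ that contains no high-degree vertex, every vertex has $T$-degree at most $2$, so $C$ is a path; moreover the vertex of $C$ adjacent to $v$ in $T$ is an endpoint of that path, so $C$ together with its edge to $v$ is a pendent path of $v$. Since $\deg(v) \geq 3$ and at most one component of $T - v$ fails to be such a pendent path, at least $\deg(v) - 1 \geq 2$ of the components are pendent paths of $v$, and the subgraph induced by $v$ together with these pendent paths is a pendent generalized star centered at $v$.

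Next, I would produce two such vertices by an extremal argument. Let $H$ denote the set of high-degree vertices of $T$, and pick $v_1, v_2 \in H$ with $d_T(v_1, v_2)$ maximum. The key claim is that for each $i \in \{1,2\}$, the vertex $v_i$ satisfies the hypothesis of the preceding paragraph. To see this for $v_1$, suppose some high-degree vertex $h$ lies in a component of $T - v_1$ different from the component containing $v_2$. Then the unique $T$-path from $h$ to $v_2$ passes through $v_1$, so $d_T(h, v_2) = d_T(h, v_1) + d_T(v_1, v_2) > d_T(v_1, v_2)$, contradicting the maximal choice of $v_1, v_2$. Hence only the component containing $v_2$ has high-degree vertices, and by the first paragraph $v_1$ is the center of a pendent generalized star. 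The same argument with the roles swapped applies to $v_2$. Since $v_1 \neq v_2$ (as $|H| \geq 2$ forces $d_T(v_1,v_2) \geq 1$), the two generalized stars have distinct centers and are therefore distinct induced subgraphs, giving two pendent generalized stars.

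The main thing to verify carefully is the ``path becomes pendent'' step in paragraph one (a component of $T - v$ with no high-degree vertex really is a path pendent from $v$), and the extremal distance argument in paragraph two; once these are in place, the remainder is bookkeeping. I expect no serious obstacle, as the modification from the single-star argument in \cite{fallat2007minimum} is exactly the replacement of ``pick a high-degree vertex at maximum distance from a fixed root'' by ``pick a pair of high-degree vertices at maximum mutual distance.''
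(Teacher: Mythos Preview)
Your proof is correct but takes a different route from the paper. The paper forms the subtree $T'$ obtained by deleting all pendent paths of $T$, observes that $|V(T')|\geq 2$ since $T$ has at least two high-degree vertices, and then uses the fact that any tree on at least two vertices has at least two leaves; each leaf of $T'$ is then the center of a pendent generalized star in $T$. Your extremal argument (maximizing the mutual distance between a pair of high-degree vertices) and the paper's argument (taking leaves of $T'$) both single out high-degree vertices $v$ with the property that every other high-degree vertex lies in a single component of $T-v$; your first-paragraph local lemma is exactly what turns such a $v$ into a pendent-generalized-star center, and is implicit in the paper's claim that a leaf of $T'$ works. The paper's version is a bit more compact because it packages the extremal step as ``a tree has two leaves,'' whereas your version unpacks the same idea via an explicit distance-maximization---slightly longer, but entirely self-contained and not relying on any picture of what $T'$ looks like.
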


\begin{proof}
Let $T'$ be the subtree of $T$ created by deleting all pendent paths of $T$.  Since $T$ has at least two high-degree vertices, $|V(T')|\geq 2$.  Since $T'$ is a tree, it has at least two leaves, $u$ and $v$.   Then in $T$, $u$ and $v$ each form pendent generalized stars together with their pendent paths. 
\end{proof}

An \emph{adjusted pgs coloring} of a pendent generalized star $R$ consists of coloring the center of $R$ white, and then repeating a blue-white alternating pattern on each leg starting with the vertex adjacent to the center and proceeding toward each leaf, with every leaf colored white.   A tree  $T$ is a \emph{double generalized star} if $T$ has precisely two high-degree vertices, and they are adjacent.

\begin{lem}
 If $T$ is a double generalized star with no legs of length one, then the white vertices of the adjusted pgs colorings of both pendent generalized stars together form a minimal fort. \label{lem:adjustedpgsdouble}
\end{lem}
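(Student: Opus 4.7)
The plan is to verify the two defining properties of a minimal fort for the set $W$ of white vertices. Write $u, v$ for the two adjacent high-degree vertices of $T$, and let $R_u, R_v$ be the pendent generalized stars centered at them; these are vertex-disjoint and together partition $V(T)$. First I would check that $W$ is a fort by inspecting every blue vertex. Every blue vertex lies on a leg of one of the two pgs's, either as the vertex adjacent to a center or internally on the leg. In either case the adjusted pgs coloring places a white vertex on both sides: for a vertex adjacent to a center, one neighbor is the center (white by definition) and the other is the next vertex on the leg (white because the adj-center vertex is the single ``starting blue'' of the leg pattern); for an internal blue vertex both leg-neighbors are white by the alternation. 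This is precisely the local structure already analyzed in the proof of Proposition~\ref{lem:adjusted}, so the fort property carries over.

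For minimality I would show that turning any white vertex $w \in W$ blue produces a zero forcing set, splitting into cases by the location of $w$. If $w$ is a center, say $w = u$, then $u$'s only remaining white neighbor is $v$ (the adj-center vertices on the legs of $R_u$ are all blue), so $u$ forces $v$; with both centers blue, each adj-center vertex on every leg of $R_u$ and $R_v$ has its center neighbor blue and its outward leg-neighbor white, triggering a cascade that colors every leg blue. If instead $w$ lies on some leg $L$ of $R_u$, I would reuse the propagation argument from Proposition~\ref{lem:adjusted}: restricted to $L$, the adjusted pgs coloring has the same blue-adj-center-followed-by-standard-path structure as an adjusted-coloring leg, so the force propagates along $L$ and ultimately forces $u$ blue, after which the previous subcase finishes the job.

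The step I expect to be the main obstacle is this second minimality subcase, where one must argue that forcing initiated inside $R_u$ correctly crosses the edge $uv$ into $R_v$. The key observation is that forces occurring strictly inside $R_u$ do not touch any vertex of $R_v$ (the two pgs's share no vertices, and their only connecting edge is $uv$), so the leg-to-$u$ propagation is unaffected by the presence of $R_v$; and once $u$ has been forced blue the argument reduces exactly to the first subcase, with $u$ then forcing $v$ because $v$ is its only white neighbor. Verifying this crossing, together with checking that the adj-center vertex is the unique blue vertex adjacent to each center in the combined coloring, is where the careful bookkeeping lives; every other ingredient is immediate from the proof of Proposition~\ref{lem:adjusted}.
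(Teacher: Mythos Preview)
Your proposal is correct and follows essentially the same approach as the paper: verify the fort condition by checking that each blue vertex has two white neighbors, then establish minimality by a case split on whether the re-colored white vertex is a center or lies on a leg. The only cosmetic difference is the order of forcing in the center case (you have $u$ force $v$ first and then cascade into all legs, while the paper first cascades through $u$'s legs before forcing $v$), but both orderings are valid and the underlying argument is the same.
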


\begin{proof}
We see that every blue vertex has two white neighbors in $T$.  Suppose we change some white vertex to blue.  If it is a high-degree vertex, then every leg of its pendent generalized star will be forced, and the high-degree vertex will force its high-degree neighbor, which will then force all of its legs.  If we change a white vertex on any pendent path to blue, we now have either a blue leaf or two blue adjacent vertices on a leg, forcing the leg to blue along with the center of its pendent generalized star, and as above, the blue vertices form a zero forcing set.    Hence, the white vertices of the adjusted pgs coloring of both pendent generalized stars form a minimal fort.  
 \end{proof}

\begin{lem}
Suppose $T$ is a double generalized star.  Then $T$ is well-failed if and only if $T$ is leafy. \label{lem:doublestar}
\end{lem}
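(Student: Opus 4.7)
The plan is to handle the two directions separately. The reverse direction is immediate: if $T$ is leafy, Theorem \ref{thm:leafy} gives that $T$ is well-failed. For the forward direction I will prove the contrapositive, namely that if $T$ is not leafy then $T$ has two minimal forts of different sizes. A useful preliminary observation is that in a double generalized star the only non-leaf vertices are the two high-degree vertices $u, v$ and the degree-two internal leg vertices, and the latter can never have a double pendant; thus $T$ is leafy if and only if every leg has length exactly one, so ``not leafy'' is equivalent to ``some leg has length at least two.''

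I will split into two cases. If $T$ has a double pendant, Observation \ref{obs:doublependants} already provides a minimal fort of size two, and applying Proposition \ref{lem:standardgenstar} to the length-$\geq 2$ leg together with any other leg at the same high-degree vertex produces a minimal fort of size at least three. If $T$ has no double pendant, then each of $u, v$ has at most one pendant, and I will pair a ``small'' minimal fort coming from Proposition \ref{lem:standardgenstar} applied to two legs at a single center with a ``large'' minimal fort constructed via one of three tailored colorings, chosen by which of $u, v$ carry a pendant. If neither does, I take the adjusted pgs coloring of both pendent generalized stars (Lemma \ref{lem:adjustedpgsdouble}). If exactly one, say $v$, has no pendant, I apply the adjusted pgs coloring to $v$'s pgs and on $u$'s side make $u$ and its pendant blue while coloring each long leg of $u$ with a standard alternating pattern starting white at the vertex adjacent to $u$. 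If both $u$ and $v$ have pendants, I color $u$ white (which forces $p_u$ white and each long leg of $u$ into an alternating pattern ending at a white leaf), keep $v$ blue, and color only $p_v$ white on $v$'s side.

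The main obstacle will be verifying that each hybrid coloring in the second case yields a genuine minimal fort. The fort property reduces to a local check at every blue vertex, and the only nonroutine checks occur at $u$ and $v$, where the edge $uv$ together with the leg-adjacent vertices must jointly supply the correct number of white neighbors. Minimality will follow because removing any white vertex creates a blue vertex with exactly one white neighbor---typically a blue one of $u, v$ drops from two whites down to one, or a blue interior leg vertex loses the white neighbor it needed. A leg-by-leg size comparison, using the identity $\lceil (\ell+1)/2 \rceil = \lceil \ell/2 \rceil + [\ell \text{ even}]$, shows that the hybrid fort is strictly larger than the standard-coloring fort in every sub-case, so the two minimal forts have different sizes and $T$ is not well-failed.
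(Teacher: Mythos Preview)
Your overall architecture matches the paper's proof closely: Theorem \ref{thm:leafy} for the reverse direction, the double-pendant case via Observation \ref{obs:doublependants} plus Proposition \ref{lem:standardgenstar}, and then in the no-double-pendant case a comparison of a ``small'' standard-coloring fort against a ``large'' hybrid fort.  Sub-cases 2a and 2c are sound.

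There is, however, a genuine gap in sub-case 2b.  You colour \emph{every} long leg of $u$ with the standard pattern (white at the vertex adjacent to $u$).  If $u$ has two or more long legs---which is perfectly possible, since $u$ has only the single pendant $p_u$ but may have degree $\geq 4$---then your large white set is \emph{not} a minimal fort.  Concretely: delete $v$ from the white set.  The position-1 vertices on $v$'s legs (blue in the adjusted pgs coloring) now each have exactly one white neighbour, so all of $v$'s pendent generalized star is forced blue.  At that point the only remaining white vertices are those on $u$'s long legs; but $u$ is blue with at least two white neighbours (the position-1 vertex on each long leg), and each long leg is in the stalled standard configuration.  Hence the restriction of your white set to $u$'s long legs is itself a fort, contradicting minimality.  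Worse, that sub-fort is exactly a standard-coloring fort on two legs of $u$, so it may coincide in size with the ``small'' fort you intended to compare against, and the argument collapses.

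The repair is the one the paper makes: on $u$'s side, colour only \emph{one} long leg of $u$ with the standard pattern and leave all of $u$'s other long legs entirely blue.  Then $u$ has exactly two white neighbours, namely $v$ and the single position-1 vertex, so removing $v$ lets $u$ force that leg and minimality goes through.  (The paper actually builds two minimal forts $W_1 = X \cup \{p_u\}$ and $W_2 = X \cup Y$ with $X$ the adjusted-pgs whites on $v$'s side and $Y$ the standard whites on a single longest leg of $u$, and compares $|W_1|$ with $|W_2|$ directly; your single-large-fort approach works too, once restricted to one leg.)
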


\begin{proof}
We know that the reverse direction holds by  Theorem \ref{thm:leafy}.  For the forward direction, suppose that $T$ is not leafy.  Then if each high-degree vertex has a double pendant, at least one must have a longer leg, $L$.  We can construct a minimal fort $W$ using the standard coloring on $L$ and any other leg from the same high-degree vertex; since $L$ has length at least two, $|W| \geq 3$.  Since $T$ has a double pendant, there is also a minimal fort with two vertices in $T$.  Hence $T$ is not well-failed.

For the rest of the proof, we assume that each high-degree vertex has at most one leg of length one.  First assume that there are no legs of length one.  Let $W$ be the white vertices of the standard coloring on any two legs $L_1, L_2$ of the same pendent generalized star, and $W'$ the white vertices of the adjusted pgs colorings on both pendent generalized stars of $T$.  The sets $W$ and $W'$ are minimal forts by Lemmas \ref{lem:standardgenstar} and \ref{lem:adjustedpgsdouble} respectively. Note that $\left|W \cap V(L_1)\right| -1 \leq   \left|W' \cap V(L_1) \right| \leq \left|W \cap V(L_1) \right|$ and similarly on $L_2$.  However, $\left|W' \backslash \left(V(L_1) \cup V(L_2)\right)\right| \geq4$ while $\left|W \backslash \left(V(L_1) \cup V(L_2)\right)\right| =0$, so  $|W'| > |W|$. 

Next, assume that $T$ has a leg of length one, but no more than one length-one leg for each high-degree vertex.  Let $v$ be a high-degree vertex that has a length one leg, $w$ its leaf neighbor, and $R_v$ the pendent generalized star of which $v$ is the center. Let $u$ be the other high-degree vertex (that may or may not have a length-one leg) and $R_u$ its pendent generalized star.  Let $X$ be the white vertices of the adjusted pgs coloring of $R_u$.  Let $W_1 = X \cup \{w\}$.  Take a longest leg of $R_v$, and color the vertices with the standard coloring of the path starting with the vertex adjacent to $v$.  Let $Y$ be the white vertices of this leg and let $W_2 = X \cup Y$.  We can see that $W_1$ is a fort: the only vertex in $V(R_v) \cap W_1$ is $w$.  Its neighbor is $v \notin W_1$, and $v$ has two neighbors, $w, u$ in $W_1$.  Every  vertex in $V(R_u) \backslash W_1$ has exactly two neighbors in $V(R_u) \cap W_1$, so $W_1$ is a fort.  For $W_2$, note that each vertex in $V(R_v) \backslash W_2$ has two neighbors in $W_2$, and the same argument holds for $V(R_u)\cap W_2$ as with $W_1$.  For minimality, note in both cases that if we remove any vertex from $W_i \cap V(R_u)$ that $u$ will either be blue or be forced by the leg with the change, and then all of $R_u$ will turn blue; then $v$ forces its leg with vertices from $W_i$ to turn blue. If a vertex from $R_v$ is removed from $W_i$, then all of $R_v$ will turn blue, $u$ will become blue, and then all of $R_u$.  Hence, $W_1, W_2$ are minimal forts, and $|W_2|>|W_1|$.    Hence, $T$ is not well-failed.  
\end{proof}

\begin{lem}
Suppose $W$ is a fort in a tree $T$ where $|V(T)| \geq 2$.  If $w \in W$, then there exists a path $P$ from a leaf $x_1$ to another leaf $x_2$ of $T$ containing $w$ such that $x_1, x_2 \in W$, and any vertex $v \in V(P) \backslash W$ has a neighbor on either side in $P$ that is in $W$.  \label{lem:leaftoleafcontainingx} 
\end{lem}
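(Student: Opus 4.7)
The plan is to build $P$ by a greedy bidirectional walk starting at $w$, using the fort condition on $W$ to drive each extension and the tree structure to force termination at a leaf of $T$ lying in $W$. If $w$ is itself a leaf of $T$, I would set $x_1 := w$ and run the extension in the unique direction out of $w$; if $w$ is not a leaf, I would pick any two distinct neighbors $v_1, v_2$ of $w$ and run the extension independently from $w$ into each of them. Because $T$ is a tree, the components of $T - w$ containing $v_1$ and $v_2$ are disjoint, so the two one-sided extensions concatenate through $w$ into a genuine simple leaf-to-leaf path.

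The extension procedure itself maintains a growing path whose last vertex always lies in $W$ and in which every already-committed non-$W$ vertex is flanked on the path by two $W$-vertices. From a current endpoint $u \in W$ that is not a leaf of $T$, I would pick any forward neighbor $u'$ (any neighbor of $u$ other than the previous path vertex, which exists since $\deg_T(u) \geq 2$). If $u' \in W$, I append only $u'$ and the invariant survives; if $u' \notin W$, then $u'$ has $u \in W$ as a neighbor, so the fort condition forces a second $W$-neighbor $u'' \neq u$ of $u'$, and I append both $u'$ and $u''$, restoring the invariant with $u'$ properly sandwiched. Since $T$ is a tree, any appended vertex must be new (otherwise a cycle would appear), so the path grows strictly and the procedure terminates in at most $|V(T)|$ steps.

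The key verification, and the only substantive obstacle, is ensuring that the walk can only terminate at a leaf of $T$ that lies in $W$ rather than at a leaf outside $W$ or at some non-leaf. Non-leaf termination is ruled out directly by the extension rules above. To handle the leaf case, I would use the fort property once more: if from $u \in W$ I extend to a neighbor $u'$ that happens to be a leaf of $T$, then $u'$ has only $u$ as a neighbor, so $u' \notin W$ would give $|N(u') \cap W| = 1$, contradicting that $W$ is a fort. Hence every leaf reached by the walk is automatically in $W$. Combining the two one-sided extensions (or the single extension, if $w$ is a leaf) then yields the required leaf-to-leaf path $P$ through $w$ with both endpoints in $W$ and every non-$W$ internal vertex flanked by $W$-vertices.
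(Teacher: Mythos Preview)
Your proposal is correct and follows essentially the same greedy path-extension strategy as the paper: grow a path from $w$ by repeatedly stepping to a neighbor, and whenever you land on a non-$W$ vertex, invoke the fort condition to jump back into $W$; do this in both directions when $w$ is not a leaf. Your write-up is in fact more explicit than the paper's on two points---the tree structure guaranteeing simplicity of the growing path, and the fort condition forcing any terminal leaf to lie in $W$---but the underlying idea is identical.
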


\begin{proof}
Let $w \in W$. Construct $P$ as follows.  Consider any neighbor of $w$.  If the neighbor is in $W$ add it to $P$.  Take the next such neighbor and the next, adding each to $P$ until we reach either a leaf, or a neighbor $v_1 \notin W$, which we add to $P$.  Now, $v_1$ has a neighbor in $W$ (along the path we've constructed so far), so it must have a second neighbor in $W$.  Call that neighbor $w_1$ and add it to $P$.  We continue this process for $w_1$ that we did for $w$ until we finally reach a leaf.  

If $w$ is a leaf, then we're done. Otherwise, we can perform this process starting with another neighbor of $w$, and complete construction of $P$.  
\end{proof}

\begin{cor}
If $w \in W$ for a minimal fort $W$ in a tree $T$, and $w$ is not a vertex in a double pendant, then $|W| \geq 3$.  \label{cor:biggerthantwo}
\end{cor}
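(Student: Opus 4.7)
The idea is to deduce Corollary~\ref{cor:biggerthantwo} directly from Lemma~\ref{lem:leaftoleafcontainingx}, with only a short case analysis on whether or not $w$ is itself a leaf. Applying Lemma~\ref{lem:leaftoleafcontainingx} to $w \in W$ produces two leaves $x_1, x_2 \in W$ with a path $P$ from $x_1$ to $x_2$ passing through $w$. In particular, $\{w, x_1, x_2\}\subseteq W$.

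First I would dispose of the easy case: if $w$ is not a leaf, then $w \notin \{x_1,x_2\}$, so the three vertices $w, x_1, x_2$ are distinct and $|W|\geq 3$. This leaves the case where $w$ itself is a leaf, and I would handle it by contradiction. Relabel so that $w=x_1$, and suppose $|W|\leq 2$; since $w, x_2\in W$ are distinct leaves, this forces $W=\{w,x_2\}$. Let $w'$ be the unique neighbor of $w$. Since $w'\notin W$ (the alternative $w'=x_2$ would make $T=K_2$, a degenerate case set aside below), the fort condition $|N(w')\cap W|\neq 1$ combined with $w\in N(w')\cap W$ forces $x_2\in N(w')$. Because $x_2$ is a leaf, $w'$ is its only neighbor, and now $w$ and $x_2$ are two pendants sharing the common neighbor $w'$, so $w$ lies in a double pendant, contradicting the hypothesis.

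The only real obstacle is the leaf subcase, and it is resolved by a single application of the fort condition at $w'$, using that leaves have a unique neighbor to conclude the double-pendant structure. The degenerate possibility $T=K_2$ does not interfere with any downstream use of the corollary in this paper (which deals with trees that have at least one high-degree vertex), so it can be ignored without affecting anything that follows.
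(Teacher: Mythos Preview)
Your argument is correct and follows the paper's intended route: the corollary is stated immediately after Lemma~\ref{lem:leaftoleafcontainingx} with no separate proof, and your deduction from that lemma (producing two leaves $x_1,x_2\in W$ and then forcing a double-pendant structure when $|W|=2$) is exactly what is meant. Your observation about $T=K_2$ is apt---the statement as written does fail there---and your assessment that this edge case is harmless for all downstream uses in the paper is accurate.
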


The following theorem is the direct result of combining Theorem \ref{thm:antifortimpliesirrelevant} and \cite[Theorem 2.10]{grood2024well}.

\begin{thm}
Suppose $T$ has no double pendants. Then every vertex of $v$ is in some minimal fort.  \label{thm:nodoublependantstree}
\end{thm}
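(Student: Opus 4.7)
The plan is to recognize that this theorem is asserting that, in a tree with no double pendants, no vertex is fort-irrelevant. Since ``in some minimal fort'' is the negation of ``fort-irrelevant,'' the statement is equivalent to: if $T$ has no double pendants, then $T$ has no fort-irrelevant vertices.

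The approach is to invoke the two results indicated in the preamble. First, I would apply Theorem \ref{thm:antifortimpliesirrelevant}, which identifies fort-irrelevant vertices with zero-forcing-irrelevant vertices. This reduces the problem to showing that every vertex of $T$ lies in some minimal zero forcing set. Then I would cite \cite[Theorem 2.10]{grood2024well}, which (as used in the authors' own Corollary \ref{cor:antifort}) characterizes the zero-forcing-irrelevant vertices in a tree as exactly the star centers. Since $T$ has no double pendants, the set $B_0$ from the definition of star centers is empty, so no star removals can be performed and the iterative construction terminates immediately with no star centers produced. Therefore $T$ has no zero-forcing-irrelevant vertices, and by Theorem \ref{thm:antifortimpliesirrelevant} no fort-irrelevant vertices, so every vertex lies in some minimal fort.

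Alternatively, and more directly, I could bypass the trip through zero forcing by appealing to Corollary \ref{cor:antifort} itself, which already states that in a tree the fort-irrelevant vertices are precisely the star centers; the no-double-pendant hypothesis then immediately forces the star-center set to be empty. Either route is essentially a one-line deduction, so there is no real technical obstacle; the only thing to be careful about is confirming that absence of double pendants in $T$ really does kill the entire star-center hierarchy (not just $B_0$), but this is immediate because $B_i$ for $i \geq 1$ is built from $G_i$, which is only non-trivially defined once $B_0$ is non-empty. Thus the proof will be short and essentially bookkeeping.
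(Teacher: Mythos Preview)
Your proposal is correct and matches the paper's own treatment: the paper states that this theorem ``is the direct result of combining Theorem \ref{thm:antifortimpliesirrelevant} and \cite[Theorem 2.10]{grood2024well},'' which is exactly the route you outline. Your extra remark that the absence of double pendants forces $B_0=\emptyset$ and hence kills the entire star-center hierarchy is the only piece of bookkeeping needed, and it is correct.
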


We will also use the following lemmas to characterize well-failed trees.

\begin{lem}
Let $T$ be a tree with no double pendants.  Then for any pair of leaves $v_1, v_2$ in $T$, we can construct a minimal fort containing $v_1$ and $v_2$.  \label{lem:anyleaftoleaf}
\end{lem}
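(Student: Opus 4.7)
My plan is to prove this by strong induction on $|V(T)|$, using pendent-path removal to reduce to smaller trees. The base case is when $T$ is itself a path: its two leaves are $v_1$ and $v_2$, and the standard coloring of $T$ produces a minimal fort containing both endpoints by the proposition on standard colorings of paths.

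For the inductive step, suppose $T$ is not a path, so that $T$ has a leaf $w \notin \{v_1, v_2\}$. I would trace inward from $w$ to the first high-degree vertex $u$ of $T$, obtaining a pendent path (leg) $L = u{-}y_1{-}y_2{-}\cdots{-}y_m{-}w$, and let $T^* = T - (V(L) \setminus \{u\})$. First I would verify that $T^*$ inherits the no-double-pendants property: the only vertex whose degree changes is $u$, which retains degree at least two in $T^*$ (since it was high-degree in $T$), so no new leaves are created at $u$, and any double pendant in $T^*$ would already be a double pendant in $T$. Since $v_1$ and $v_2$ remain leaves of $T^*$, the inductive hypothesis yields a minimal fort $W^*$ of $T^*$ containing both.

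Next I would extend $W^*$ to a fort $W$ in $T$ by cases on whether $u \in W^*$. If $u \notin W^*$, set $W = W^*$: the leg is then entirely blue, with each leg vertex having zero white neighbors, so the fort condition holds trivially. If $u \in W^*$, set $W = W^* \cup L'$, where $L'$ consists of the white leg vertices other than $u$ in the standard coloring of the path $u, y_1, \ldots, y_m, w$ starting from $u$; each blue leg vertex then has exactly two white neighbors on $L$, so $W$ is a fort containing $v_1$ and $v_2$.

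Finally, to verify minimality, I would take any proper subset $W' \subsetneq W$ and decompose it as $W' = W'_* \cup W'_L$ with $W'_* \subseteq W^*$ and $W'_L \subseteq L'$. If $W'_* \subsetneq W^*$, the minimality of $W^*$ in $T^*$ produces a vertex $v \in V(T^*)$ with exactly one white neighbor in $W'_*$; this violation transfers to $T$ because the neighborhoods of $v$ agree in $T$ and $T^*$ (noting that $u$'s extra leg neighbor $y_1$ is blue and hence not in $W'$). If instead $W'_* = W^*$ but $W'_L \subsetneq L'$, then the smallest-indexed missing leg vertex $y_{2j} \in L' \setminus W'_L$ makes the adjacent blue vertex $y_{2j-1}$ have exactly one white neighbor in $W'$ (namely $u$ if $j = 1$, else $y_{2j-2}$), violating the fort condition. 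The hard part of this argument will be the second minimality case: since being a fort is not monotone under taking subsets, careful bookkeeping of blue-vertex neighborhoods along the leg under the standard coloring is needed to ensure that any partial removal of $L'$ breaks the fort property.
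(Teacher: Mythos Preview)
Your reduction is different from the paper's (which inducts on the number of high-degree vertices and trims a pendent generalized star down to its shortest leg), and your argument is mostly sound when the removed leg has length at least two. But there is a genuine gap when $m=0$, i.e., when the chosen leaf $w$ is adjacent to the high-degree vertex $u$. In that case there is no vertex $y_1$; the extra $T$-neighbor of $u$ is $w$ itself, which lies in $L'$ and may well lie in $W'$. So your parenthetical ``$u$'s extra leg neighbor $y_1$ is blue and hence not in $W'$'' is simply false, and a violation at $u$ in $T^*$ need not transfer to $T$. This is not a cosmetic issue: your constructed $W$ can fail to be minimal.

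Concretely, let $T$ have adjacent high-degree vertices $u$ and $q$, with a pendant $w$ at $u$, a path $u\text{--}p\text{--}p_1\text{--}p_2$, and paths $q\text{--}q_1\text{--}q_1'$ and $q\text{--}q_2\text{--}q_2'$. Then $T$ has no double pendants. Take $v_1=p_2$, $v_2=q_1'$, and suppose you choose the leaf $w$ (your proof does not exclude this). Then $T^*=T-w$, and
\[
W^*=\{u,\,p,\,p_2,\,q_1,\,q_1',\,q_2,\,q_2'\}
\]
is a minimal fort of $T^*$ containing $v_1,v_2$: the only blue vertices are $p_1$ and $q$, with two and three white neighbors respectively, and deleting any single vertex of $W^*$ produces a blue vertex with exactly one white neighbor. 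Since $u\in W^*$ and $m=0$, your recipe gives $W=W^*\cup\{w\}$. But $W\setminus\{u\}$ is still a fort in $T$: the blue vertices are now $u,p_1,q$, and $u$ has two white neighbors $w$ and $p$, while $p_1$ and $q$ each have two. So $W$ is not minimal, and your proof breaks down. The paper avoids this by never isolating a single length-one leg; its reduction keeps the center of the trimmed pendent generalized star at degree two and patches with the adjusted pgs coloring when needed.
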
                                                                                                                                                                                                                                                                                                                                                                                                                                                                                                                                                                                                                                                                                                                                                                                                                                                                                                                                                                                                                                                                                                                                                                                                                                                                                                                                                                                                                                                                              

\begin{proof}
We know that the statement holds for paths and generalized stars.  We use induction on the number of high-degree vertices in $T$.  Suppose the result holds if $T$ has up to $k \geq 1$ high-degree vertices.  Let $T$ be a tree with $k+1$ high-degree vertices.  Then $T$ has a pendent generalized star, $R$.  Let $v$ be the center of $R$.
Let $T'$ be the tree formed by removing all but the shortest leg from $R$, and let $x$ be the end vertex of the remaining leg of $R$ in $T'$.  Then in $T'$, the vertex $v$ has degree two, giving us that $T'$ has $k$ high-degree vertices, and we can apply the induction hypothesis to $T'$.  Consider any pair of leaves in $T$.  If both leaves are in $R$, then we can use the standard coloring on the two legs containing the leaves to produce a minimal fort.  If both leaves are outside of $R$, let $W'$ be the minimal fort on $T'$ that contains them.  If $v \notin W'$, then $W'$ is also a minimal fort on $T$ and we're done.  Otherwise, if $v \in W'$, then let $X$ be the white vertices of the adjusted pgs coloring on $R$, and let $W=W' \cup X$.  Since $T$ has no double pendants, all legs of $R$ in $T \backslash T'$ have length greater than one, and $W$ is a minimal fort on $T$.   

Finally, suppose $v_1 \in R$, and $v_2 \notin R$.  We know that there is a minimal fort $W'$ of $T'$ containing the leaves $x$ and $v_2$.  If $v \notin W'$ and $x=v_1$, then $W'$ is a minimal fort in $T$ as well.  If $v \notin W'$ and $x \neq v_1$, then let $Y'$ be the white vertices of $W'$ on the leg containing $x$, and let $Y$ be the  white vertices of the standard coloring of the path on the leg containing $v_1$.   Then $W= (W' \cup Y) \backslash Y'$ is a minimal fort on $T$.  If $v \in W'$, then define $W= W' \cup X$ where $X$ is defined as in the case above, and we're done.
 \end{proof}

The following is a result of combining \cite[Corollary 2.13]{grood2024well} with Theorem \ref{thm:antifortimpliesirrelevant}.

\begin{lem}
If $T$ is a tree and $T'$ the remaining graph after a star removal is performed on $T$, then a vertex $u \in V(T')$ is irrelevant  in $T'$ if and only if it is irrelevant in $T$.  \label{lem:starremovalirrelevant}
\end{lem}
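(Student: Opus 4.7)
The plan is to deduce the statement directly from two already-available results, with no fresh graph-theoretic work needed. By Theorem \ref{thm:antifortimpliesirrelevant}, in any graph $G$ the fort-irrelevant vertices and the zero-forcing-irrelevant vertices coincide, and the paper has just declared "irrelevant" to mean either of these (equivalent) notions. Hence the lemma is logically equivalent to the same statement phrased for zero-forcing-irrelevant vertices.

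First I would record, in one line, this translation: for any tree $T$ (in particular for $T'$), a vertex $u$ is irrelevant in $T$ iff it is zero-forcing-irrelevant in $T$, by Theorem \ref{thm:antifortimpliesirrelevant}. Then I would invoke \cite[Corollary 2.13]{grood2024well}, which states precisely that zero-forcing-irrelevance is preserved in both directions under a star removal on a tree: a vertex $u \in V(T')$ is zero-forcing-irrelevant in $T'$ iff it is zero-forcing-irrelevant in $T$. Chaining the two equivalences gives the desired biconditional.

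The only real care is to check that the star-removal operation used in this paper (delete $L[v]$ for some $v$ with $|L(v)|\ge 2$) matches the one in \cite{grood2024well}; once this is verified, the hypotheses of Corollary 2.13 apply without modification, so no separate verification for each direction of the biconditional is needed.

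The main obstacle is therefore not mathematical but bibliographic: confirming that the notion of star removal and of zero-forcing-irrelevance in \cite{grood2024well} agree exactly with the definitions used here so that Corollary 2.13 can be cited verbatim. Given that the definition of star center in this paper is explicitly patterned on the $B$-vertices of \cite{grood2024well}, this check is routine, and the proof reduces to two sentences citing the two results.
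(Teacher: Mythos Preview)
Your proposal is correct and matches the paper's approach exactly: the paper states the lemma as ``a result of combining \cite[Corollary 2.13]{grood2024well} with Theorem \ref{thm:antifortimpliesirrelevant}'' and gives no further argument. Your only addition is the (sensible) remark that one should check the definitions of star removal agree, which is indeed routine since this paper's star centers are explicitly the $B$-vertices of \cite{grood2024well}.
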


We are now ready for the main result of this section: a characterization of well-failed trees.

\begin{thm}
A tree $T$ is well-failed if and only if one of the following holds: $T = P_n$ where $n \leq 4$ or $n=6$,   $T = \specialt$, or $T$ is leafy.  
 \label{thm:tree}
\end{thm}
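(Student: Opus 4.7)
The backward direction combines Lemma~\ref{lem:path}, Lemma~\ref{lem:specialgenstar}, and Theorem~\ref{thm:leafy}, which together handle each family in the disjunction; so the interesting direction is the forward one.

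For the forward direction I would proceed by contrapositive, splitting on the number of high-degree vertices in $T$. If $T$ has no high-degree vertex, then $T$ is a path and Lemma~\ref{lem:path} finishes. If $T$ has exactly one high-degree vertex, then $T$ is a generalized star; since $K_{1,n}$ is leafy, the hypotheses $T\neq\specialt$ and $T$ not leafy together with Lemma~\ref{lem:genstar_wellfailed} finish. If $T$ has exactly two adjacent high-degree vertices, $T$ is a double generalized star and Lemma~\ref{lem:doublestar} finishes.

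The remaining case---$T$ has at least two high-degree vertices and is not a double generalized star---is where the main work lies. By Lemma~\ref{lem:twopgs}, $T$ contains at least two pendent generalized stars $R_1, R_2$ with centers $c_1, c_2$. I would split based on whether each center has a double pendant. If some $c_i$ lacks a double pendant, then $R_i$ has at most one leg of length one, and I would produce two minimal forts of different sizes by comparing standard colorings on different pairs of legs of $R_i$ together with a pendent generalized star analogue of the adjusted coloring (generalizing Lemma~\ref{lem:adjustedpgsdouble}) and extending across the rest of $T$; the arguments parallel the proofs of Lemmas~\ref{lem:genstar_wellfailed} and~\ref{lem:doublestar}. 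Otherwise both $c_1, c_2$ have double pendants, giving a minimal fort of size 2 by Observation~\ref{obs:doublependants}. To exhibit a contrasting fort of size $\geq 3$, I would use a non-leafy witness vertex $v$ (a non-leaf of $T$ lacking a double pendant): if $v$ is not a star center, then Corollary~\ref{cor:antifort} places $v$ in some minimal fort, which Corollary~\ref{cor:biggerthantwo} forces to have size $\geq 3$; otherwise $v$ is a star center in $B_j$ for some $j \geq 1$, and I would explicitly build a fort by coloring the unique path in $T$ from a leaf of one double pendant to a leaf of another with alternating blue and white, and coloring every vertex off this path blue.

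The main obstacle is this final sub-sub-case, in which $v$ is fort-irrelevant and therefore the size-$\geq 3$ fort must be built without $v$. The chosen leaf-to-leaf path may pass near $v$ or other star centers, and may contain additional high-degree vertices with their own off-path subtrees. Verifying that the constructed set is both a fort and \emph{minimal} requires careful neighbor-count checks at every vertex, and a parity adjustment (such as a ``double-white'' in the middle of the alternating coloring) may be needed to ensure that both endpoints end up white while off-path blue vertices adjacent to white path vertices still have the correct number of white neighbors.
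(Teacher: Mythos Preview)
Your backward direction and the small cases (paths, single generalized stars, double generalized stars) match the paper. The remaining case is where your outline diverges from the paper's, and your sketch leaves two real gaps.

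In your Sub-case A (some $c_i$ lacks a double pendant), the phrase ``extending across the rest of $T$'' hides the main difficulty. The adjusted pgs coloring on $R_i$ leaves $c_i$ white, but $c_i$ has a neighbor outside $R_i$ that is then blue with exactly one white neighbor; so this is not a fort unless you continue making choices through the rest of $T$. In a tree with many high-degree vertices this propagation and its \emph{minimality} are not local checks, and ``paralleling'' Lemmas~\ref{lem:genstar_wellfailed} and~\ref{lem:doublestar} does not suffice, since those only handle one or two high-degree vertices. The paper treats this globally: it first splits on whether $T$ has any double pendant, and in the no-double-pendant case it proves a separate inductive lemma, Lemma~\ref{lem:anyleaftoleaf}, that any two leaves of such a tree lie in a common minimal fort. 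A minimal fort joining a leaf of $R_1$ to a leaf of $R_2$ is then compared in size with the standard forts on the individual $R_i$ via a short case analysis on whether $v_1,v_2$ belong to it. Your plan is missing this lemma or an equivalent.

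For your final sub-sub-case (the non-leafy witness $v$ is a star center in some $B_j$, $j\ge 1$), you correctly flag the obstacle, but the alternating leaf-to-leaf construction you propose has exactly the minimality and parity problems you note, and fixing them is not routine. The paper avoids any explicit construction here. In the ``$T$ has a double pendant'' branch it removes all $B_0$ stars to obtain $T'$ and invokes Lemma~\ref{lem:starremovalirrelevant} (irrelevance is preserved under star removal): if $B_1=\emptyset$, Theorem~\ref{thm:nodoublependantstree} applied to $T'$ yields a vertex $x\in V(T')$ that is not irrelevant in $T$ and is not part of any double pendant of $T$; if $B_1\neq\emptyset$, a neighbor $w$ of a $B_1$ vertex that is a leaf in $T'$ but not in $T$ plays the same role. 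Corollary~\ref{cor:biggerthantwo} then gives a minimal fort of size at least $3$ in $T$, contrasting with the size-$2$ fort from the double pendant. Thus the paper's organizing split is ``does $T$ have any double pendant,'' not ``do $c_1,c_2$ have double pendants,'' and the two tools your outline lacks are Lemma~\ref{lem:anyleaftoleaf} and Lemma~\ref{lem:starremovalirrelevant}.
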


\begin{proof}

The reverse direction is established by  Lemma \ref{lem:path}, Theorem \ref{thm:leafy}, and Lemma  \ref{lem:specialgenstar}.

For the forward direction,  note that if $T=P_n$ with $n \notin \{1, 2, 3, 4,6\}$ that $T$ is not well-failed by Lemma \ref{lem:path}.  For the rest of the proof, we assume that $T \neq \specialt$ is a tree with at least one high-degree vertex and that $T$ is not leafy.

Suppose that $T$ has some double pendants.  Let $T'$ be the tree remaining after doing star removals on $T$ for all vertices in $B_0$.  Note that $V(T')$ is nonempty because $T$ is not leafy.  If $B_1$ is empty, then by Theorem \ref{thm:nodoublependantstree} and Lemma \ref{lem:starremovalirrelevant},  every vertex in $V(T')$ is in a minimal fort of $T$.   Let $x \in V(T')$, and note that $x$ is not part of a double pendant in $T$ because $x \in V(T')$, so by Corollary \ref{cor:biggerthantwo}, any minimal fort of $T$ containing $x$ has at least three vertices, and $T$ is not well-failed.   If $B_1$ is not empty, let $v$ be a star center in $B_1$, and let $w$ be a neighbor of $v$ that is a leaf in $T'$ but not in $T$.  Then $w$ is in a minimal fort of $T'$ of order two since it's a vertex in a double pendent in $T'$, and by Lemma \ref{lem:starremovalirrelevant} and Corollary \ref{cor:biggerthantwo}, the vertex $w$ is in a minimal fort of $T$ with more than two vertices. Hence $T$ is not well-failed.

Now suppose $T$ has no double pendants.  That is, no star removal is possible.   Then every vertex in $V(T)$ is in some minimal fort by Theorem \ref{thm:nodoublependantstree}.  We know from Lemmas \ref{lem:genstar_wellfailed} and  \ref{lem:doublestar} that if $T$ is a generalized star or a double generalized star that the result holds.  Otherwise, $T$ has at least two pendent generalized stars, $R_1$ and $R_2$ by Lemma \ref{lem:twopgs}.  Let $v_1, v_2$ be the centers of $R_1, R_2$ respectively.  We can take standard colorings on all pairs of legs of $R_1$ and  for $R_2$ to create a collection of minimal forts.  If any of these forts have different cardinalities, then we're done.  Otherwise, let $r$ be the number of vertices of any standard minimal fort on $R_1$ or $R_2$.  Note that $r>2$ because $T$ has no double pendants.  

Let $w_1$ be a leaf on a longest leg of $R_1$ and $w_2$ a leaf on a longest leg of $R_2$.  By Lemma \ref{lem:anyleaftoleaf}, there exists a minimal fort $W$ containing $w_1$ and $w_2$.  If $v_1 \notin W$, then note that $|V(R_1) \cap W| \geq r/2$.  If $v_1 \in W$, then $|V(R_1) \cap W| \geq r-1$.  Similar statements hold for $v_2, R_2$ as well.  

Since $R_1, R_2$ are both pendent generalized stars and $T$ is not a double generalized star, the centers $v_1, v_2$ are not adjacent.  If neither of $v_1, v_2$ is in $W$, then there must be at least one more vertex from $W$ between $v_1$ and $v_2$, giving us that $|W| \geq r+1$.  If $v_1 \in W$ and $v_2 \notin W$, we also must have at least one more vertex from $W \backslash \left( V(R_1) \cup V(R_2) \right)$, giving us that $|W| \geq r-1 + r/2 +1 > r$.  Finally, if $v_1, v_2 \in W$, then $|W| \geq 2(r-1) > r$.  Hence in all cases, there is a minimal fort larger than $r$, giving us that $T$ is not well-failed. 
\end{proof}

A tree is well-forced if and only if performing star removals until no more star removals can be performed results in a possibly empty set of copies of $K_2$  \cite[Theorem 3.9]{grood2024well}.  Since performing star removals on a leafy tree results in an empty graph,  together with Theorem \ref{thm:tree} we have the following corollary. 

\begin{cor}
Any tree $T$ that is well-failed and is not a path or $\specialt$ is well-forced. 
\end{cor}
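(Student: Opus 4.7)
The plan is to chain together Theorem~\ref{thm:tree} with the characterization of well-forced trees stated just before the corollary: a tree is well-forced if and only if iterated star removals leave a (possibly empty) disjoint union of copies of $K_2$.

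First I would invoke Theorem~\ref{thm:tree}. Since $T$ is well-failed and is by hypothesis neither a path nor $\specialt$, the classification in Theorem~\ref{thm:tree} forces $T$ to be leafy, i.e.\ every non-leaf of $T$ has a double pendant. Because $T$ is not a path we also have $|V(T)|\geq 3$, so $T$ contains at least one non-leaf vertex.

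Next I would verify that a single round of star removals reduces $T$ to the empty graph. The leafy hypothesis places every non-leaf of $T$ in $B_0$, so performing the star removals simultaneously at every vertex of $B_0$ deletes $L[v]=\{v\}\cup L(v)$ for each non-leaf $v$. Because $T$ is a connected tree on at least three vertices, every leaf of $T$ has some non-leaf neighbor $v$ and is therefore swept away as part of that $L[v]$. Hence $T_1$ is the empty graph, which is vacuously a disjoint union of zero copies of $K_2$. Applying \cite[Theorem 3.9]{grood2024well} then yields that $T$ is well-forced.

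There is no serious obstacle in the argument; it is essentially a two-step composition of two already-established characterizations. The only point that merits care is checking that simultaneous star removal at all non-leaves of a leafy tree on at least three vertices really does leave nothing behind, so that no stray leaf or isolated vertex survives to spoil the ``disjoint union of $K_2$'s'' condition. This follows at once from the connectedness of $T$ together with the leafy property, but it is worth stating explicitly since it is what lets the two characterizations align cleanly.
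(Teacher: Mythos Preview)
Your proposal is correct and follows essentially the same route as the paper: invoke Theorem~\ref{thm:tree} to conclude that $T$ is leafy, observe that star removals on a leafy tree yield the empty graph, and then apply \cite[Theorem~3.9]{grood2024well}. The paper states the middle step without justification, whereas you spell out why every vertex is swept away; otherwise the arguments are the same.
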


\section{Other well-failed graphs}
We have established that leafy graphs are well-failed, and for trees, found a characterization.  Here we first determine which cycles are well-failed, and then identify other well-failed graphs that contain cycles and are not leafy.  

\begin{prop}
A cycle $C_n$ is well-failed if and only if $n \leq 5$ or $n=7$.
\end{prop}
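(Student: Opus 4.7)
The strategy is to use the fact that $C_n$ is $2$-regular to show that stalled sets in $C_n$ have a rigid structure, thereby reducing the problem to the classical question of when every maximal independent set of $C_n$ has the same size.

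First I would prove that the stalled sets of $C_n$ are exactly the independent sets. If $S \subsetneq V(C_n)$ is stalled, then each vertex in $S$ has either $0$ or $2$ white neighbors; hence if any maximal run of consecutive $S$-vertices around the cycle has length $\ell \geq 2$, its endpoints would have exactly one white neighbor, a contradiction. So every run has length one, i.e., $S$ is independent. The converse is immediate: in an independent set $S$, every $v \in S$ has both neighbors outside $S$, so it has $2$ white neighbors. Combined with Lemma~\ref{lem:equivalent}, this tells us the maximal failed sets of $C_n$ are exactly the maximal independent sets of $C_n$, so $C_n$ is well-failed if and only if every maximal independent set of $C_n$ has the same cardinality.

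For the reverse direction of the proposition, I would verify that $C_n$ has this property for $n \in \{3,4,5,7\}$. The cases $n \in \{3, 4, 5\}$ are immediate by inspection (every maximal independent set has size $\lfloor n/2 \rfloor$). For $n = 7$, no pair $\{v_i, v_j\}$ can dominate the cycle since $|N[v_i] \cup N[v_j]| \leq 6 < 7$, so every maximal independent set contains at least three vertices; since $\alpha(C_7) = 3$, every maximal independent set has size exactly $3$.

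For the forward direction, I would exhibit maximal independent sets of differing sizes whenever $n = 6$ or $n \geq 8$. For $n = 6$, take $\{v_1, v_3, v_5\}$ and $\{v_1, v_4\}$, of sizes $3$ and $2$ respectively (the latter is maximal since $N[v_1] \cup N[v_4] = V(C_6)$). For $n \geq 8$, the alternating set $\{v_1, v_3, v_5, \ldots\}$ is maximal of size $\lfloor n/2 \rfloor$, and I would construct a second maximal independent set by greedily selecting roughly every third vertex along the cycle, producing one of size approximately $\lceil n/3 \rceil$, which is strictly less than $\lfloor n/2 \rfloor$ for all $n \geq 8$. The main obstacle is verifying maximality of this smaller set in each residue class of $n$ modulo $3$, adjusting the last few choices so that the set both remains independent and dominates the whole cycle; this is elementary but requires a careful case split.
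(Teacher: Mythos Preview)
Your approach is correct and is essentially a more conceptual reformulation of what the paper does. The key observation---that stalled sets in $C_n$ coincide with independent sets, so well-failed for $C_n$ is the same as well-covered for $C_n$---is sound, and the paper implicitly relies on the same fact when it justifies maximality of its sets $S$ and $S'$ by saying ``they contain no pair of adjacent vertices, but all vertices outside of $S, S'$ are adjacent to a vertex in $S, S'$.'' The difference is that you state this reduction explicitly and then appeal to the (classical) well-covered characterization of cycles, whereas the paper proceeds directly by exhibiting two specific maximal failed sets.

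The one place where the paper's argument is cleaner than yours is the construction for $n=6$ or $n\ge 8$. You propose to build a second maximal independent set by taking roughly every third vertex, which forces you into a case split on $n \bmod 3$ to ensure domination at the ``seam.'' The paper instead takes the alternating set $S=\{v_i: i \text{ odd},\, i<n\}$ and modifies it locally to $S'=\bigl(S\cup\{v_4\}\bigr)\setminus\{v_3,v_5\}$, which has size $|S|-1$ and is maximal independent uniformly for all $n\ge 6$ with $n\ne 7$ (and one checks directly that this fails to be maximal precisely when $n=7$). Adopting that construction would let you drop the case analysis entirely.
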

\begin{proof}
For $C_3$, any one vertex is a failed set.  For $C_4, C_5$, take any pair of nonadjacent vertices to form a maximum failed set, and any single vertex is not a maximal failed set.  For $n=7$, any set of three nonadjacent vertices forms a maximum failed set.  If $u, v$ are a pair of nonadjacent vertices in $C_7$, there is a path with three internal vertices from $u$ to $v$, meaning we could add the middle vertex of that path to $u, v$ to produce a failed set of order three.  Hence $C_7$ is well-failed.

Otherwise, for $n \geq 6$ with $n \neq 7$, label the vertices $v_1, v_2, \ldots, v_n$ around the cycle.   Let $S=\{v_i : i \mbox{ is odd and } i <n\}$, and let $S'=S \cup \{v_4\}\backslash \{v_3, v_5\}$.  Then $|S'|>|S|$, and note that both sets are maximal failed sets because they contain no pair of adjacent vertices, but all vertices outside of $S, S'$ are adjacent to a vertex in $S, S'$.  Hence $C_n$ is not well-failed for $n  \geq 5$ with $n \neq 7$.  
\end{proof}

\begin{obs}
Let $G$ be a graph and $W$ a fort in $G$. If $N[v] \subseteq W$ for some $v \in V(G)$ with $|N(v)|\geq 2$, then $W$ is not a minimal fort.  \label{obs:closedneighborhoodnotfort}
\end{obs}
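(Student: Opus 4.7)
The plan is to show directly that $W \setminus \{v\}$ is still a fort, which proves $W$ is not minimal since $v \in W$ and we would exhibit a proper fort subset. First I would verify $W \setminus \{v\}$ is nonempty: since $|N(v)| \geq 2$ and $N(v) \subseteq W$, the set $W \setminus \{v\}$ contains $N(v)$, which has at least two vertices.

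Next, I would check the defining condition of a fort: for every $u \notin W \setminus \{v\}$, we need $|N(u) \cap (W \setminus \{v\})| \neq 1$. Split this into two cases. The first case is $u \notin W$. Here I would observe that $v$ cannot be a neighbor of $u$, because if $u \in N(v)$ then $u \in N[v] \subseteq W$, contradicting $u \notin W$. Therefore $N(u) \cap (W \setminus \{v\}) = N(u) \cap W$, which has cardinality $\neq 1$ because $W$ is a fort. The second case is $u = v$. Then $N(v) \cap (W \setminus \{v\}) = N(v)$ (since $N[v] \subseteq W$ and $v \notin N(v)$ in a simple graph), so the cardinality equals $|N(v)| \geq 2 \neq 1$.

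Combining both cases, $W \setminus \{v\}$ satisfies the fort condition and is nonempty, hence a fort strictly smaller than $W$. Thus $W$ is not minimal. There is no real obstacle here beyond carefully handling the two cases; the cleanest wording is the implicit observation that $N[v] \subseteq W$ forces $v$ to have no neighbors outside $W$, which is exactly what makes deleting $v$ safe for all the outside vertices' neighborhood counts.
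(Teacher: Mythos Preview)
Your argument is correct. The paper states this as an Observation without proof, treating it as immediate; your write-up is exactly the natural justification one would give: removing $v$ from $W$ leaves a nonempty set (it still contains $N(v)$), and the fort condition is preserved because no vertex outside $W$ is adjacent to $v$ (so their counts are unchanged) while $v$ itself now has $|N(v)|\geq 2$ neighbors in the smaller set.
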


\begin{figure}[htbp]
\begin{center}
\begin{tikzpicture}[auto,scale=0.7] \tiny
\tikzstyle{vertex}=[draw, circle, inner sep=0.7mm]
\node (v1) at (90: 0.7) [vertex,  label=right:$u_1$] {};
\node (v2) at (90+1*72:0.7) [vertex, label=below:$u_5$] {};
\node (v3) at (90+2*72: 0.7) [vertex,   label=  below:$u_4$] {};
\node (v4) at (90+3*72: 0.7) [vertex, label= below:$u_3$] {};
\node (v5) at (90+4*72: 0.7)  [vertex, label=below: $u_2$] {};
\node (v6) at (90: 1.5) [vertex,   label=$x_1$] {};
\node (v7) at (90+1*72:1.5) [vertex, label=$x_5$] {};
\node (v8) at (90+2*72: 1.5) [vertex,  label=  below:$x_4$] {};
\node (v9) at (90+3*72: 1.5) [vertex, label= below:$x_3$] {};
\node (v10) at (90+4*72:1.5) [vertex, label= $x_2$] {};
\foreach[evaluate=\y using int(\x-1)] \x in {7, 8, 9, 10}
\draw (v\y) to (v\x);
\foreach[evaluate=\y using int(\x-2)] \x in {3, 4, 5}
\draw (v\y) to (v\x);
\foreach[evaluate=\y using int(\x-3)] \x in {4, 5}
\draw (v\y) to (v\x);
\foreach[evaluate=\y using int(\x+5)] \x in {1, 2, 3, 4, 5}
\draw (v\y) to (v\x);
\draw (v6) to (v10);
\end{tikzpicture}
\caption{The labeled Petersen graph}
\label{fig:petersen}
\end{center}
\end{figure}
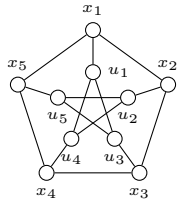

\begin{prop}
The following graphs are well-failed.
\begin{enumerate}
\item Any complete graph.
\item Any complete bipartite graph.  
\item The Petersen graph.
\end{enumerate}
\end{prop}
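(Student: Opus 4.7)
The plan is to identify the minimal forts of each of the three families, then invoke Lemma~\ref{lem:equivalent} to conclude that every maximal failed set has the same cardinality.

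For the complete graph $K_n$, any vertex $v$ outside a fort $W$ satisfies $|N(v) \cap W| = |W|$, so the fort condition reduces to $|W| \ne 1$. Thus every subset of size at least $2$ is a fort, and the minimal forts are exactly the $2$-element subsets, all of the same cardinality; the case $n = 1$ is trivial.

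For the complete bipartite graph $K_{m,n}$ with parts $A, B$, any fort $W$ with $A \not\subseteq W$ forces $|W \cap B| \ne 1$, and symmetrically. A short check shows that every fort contains a $2$-element subset lying in a single part (or else equals $V$, which is not minimal except in the degenerate case $K_{1,1} = K_2$). Since any such pair is itself a fort, the minimal forts of $K_{m,n}$ all have cardinality $2$.

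For the Petersen graph $P$, I would use the Kneser representation $K(5,2)$: identify each vertex of $P$ with a $2$-subset of $[5] = \{1, \ldots, 5\}$ (adjacency = disjointness), so a set $W$ of vertices corresponds to a spanning subgraph $H_W$ of $K_5$ whose edge set is $W$. Under this identification, the fort condition becomes: for every non-edge $T$ of $H_W$, the subgraph of $H_W$ induced on $[5] \setminus T$ has $0$, $2$, or $3$ edges, never $1$. A small case analysis using $3$-regularity and girth $5$ rules out forts of size at most $3$, so the minimum fort size is at least $4$, and I would directly verify two explicit families of size-$4$ forts: the five \emph{stars} $\{\{i,j\} : j \ne i\}$ for $i \in [5]$, with $H_W \cong K_{1,4}$, and the fifteen \emph{$4$-cycles} $\{\{a,b\}, \{b,c\}, \{c,d\}, \{d,a\}\}$ with $H_W \cong C_4$.

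The main obstacle is showing that no fort of size at least $5$ is minimal. I would argue by a degree count in $H_W$: if some vertex $i \in [5]$ has $\deg_{H_W}(i) = 4$, then $W$ contains the entire star at $i$, a size-$4$ subfort; otherwise $H_W$ has maximum degree at most $3$, forcing $|W| \le \lfloor 15/2 \rfloor = 7$. A brief enumeration over the admissible degree sequences shows that no size-$5$ or size-$7$ graph with max degree $\le 3$ satisfies the fort condition, and the only size-$6$ forts in this regime are $K_4$ (on some four vertices of $[5]$, with the remaining vertex isolated) and $K_{2,3}$, each of which contains a $4$-cycle on $4$ vertices as a subgraph and hence a size-$4$ subfort. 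Combining, every fort of $P$ of size at least $5$ contains a proper subfort, so every minimal fort of $P$ has cardinality exactly $4$, and $P$ is well-failed.
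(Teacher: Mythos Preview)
Your treatment of $K_n$ and $K_{m,n}$ matches the paper's: both observe that any two vertices with identical open neighborhoods form a fort of size~$2$, so all minimal forts have size~$2$ (with the degenerate cases $K_1$ and $K_2$ handled separately).

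For the Petersen graph your argument is correct but takes a genuinely different route. The paper works with the standard inner/outer pentagon drawing, quotes $\Z(G)=5$ and $\F(G)=6$ to pin the minimum fort size at~$4$, and then does a case analysis on $|W\cap X|$ for the outer $5$-cycle $X$ to rule out minimal forts with $|W|\ge 5$, repeatedly exhibiting explicit $4$-vertex subforts or zero forcing complements. You instead pass to the Kneser model $K(5,2)$, recasting a fort $W$ as the edge set of an auxiliary graph $H_W\subseteq K_5$ and the fort condition as ``every induced $3$-vertex subgraph on the complement of a non-edge has $0$, $2$, or $3$ edges.'' The degree bound (maximum degree $\le 3$ in $H_W$ forces $|W|\le 7$) limits the search range cleanly, and your two explicit families of $4$-forts, $K_{1,4}$ and $C_4$, then absorb the surviving $6$-vertex forts ($K_4$ and $K_{2,3}$). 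This is more structural and would transfer to other Kneser graphs; the paper's approach is more self-contained for a reader who does not know that model. One caution: your ``brief enumeration'' over degree sequences for $|W|\in\{5,6,7\}$ with $\Delta(H_W)\le 3$ actually involves about a dozen small graphs (for $|W|=5$ alone the realizable sequences are $(2,2,2,2,2)$, $(3,2,2,2,1)$, $(3,3,2,1,1)$, $(3,3,2,2,0)$), so in a full write-up you should either list them or state that the check is routine; the claims themselves all hold.
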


\begin{proof}
\begin{enumerate}
\item For $K_1$, the result is trivial.  For $n\geq 2$, note that any pair of vertices forms a minimum fort in $K_n$.  
\item Let $G=K_{m,n}$ with $m \leq n$.  We know the result holds if $m =1$ by Theorem \ref{thm:tree}, so assume $m \geq 2$.  If $\{v_1, v_2\}$ is a pair of vertices in the same partite set, then $N(v_1)=N(v_2)$.  Hence $\{v_1, v_2\}$ forms a fort that is minimal because $|\{v_1, v_2 \}|=2$.  Any set containing $\{v_1, v_2\}$ as a proper subset is not a minimal fort.  Thus all minimal forts have order $2$, and $K_{m,n}$ is well-failed.    

\item Let $G$ be the Petersen graph.  We label the vertices as shown in Figure \ref{fig:petersen}: draw the graph as an outer $C_5$ drawn in its typical cycle form and an inner $C_5$ drawn as a five-point star.  Let $X$ be the vertices of the outer cycle, labeled in order around the cycle $x_1$ through $x_5$.  Let $U$ be the vertices of the inner cycle, with $u_i$ the vertex in $U$ that is adjacent to $x_i \in X$.  Note that choice of ``outer'' versus ``inner'' cycle is arbitrary.

It is known that $\Z(G)=5$ and $\F(G)=6$ \cite{aim2008zero, fetcie2015failed}.  Thus, any minimum fort has four vertices.  We will show that all minimal forts have four vertices.  Assume that some fort $W$ has more than four vertices. Then without loss of generality, $|X \cap W| \geq 3$.  

If $|X \cap W| = 5$, then by Observation \ref{obs:closedneighborhoodnotfort}, we must have $U \cap W = \emptyset$, but then $W$ is not a fort since each vertex in $U$ has one neighbor in $W$.  Hence $|X \cap W| <5$.

Suppose $|X \cap W| = 4$.  Let $x_5$ be the lone vertex in $X \backslash W$.  By Observation \ref{obs:closedneighborhoodnotfort}, $u_2, u_3 \notin W$.  Now, $\{u_1, u_2, u_3, u_4, x_5\}$ is a zero forcing set, so if $W$ is a fort, we must have at least one of $\{u_1, u_4\}$ in $W$.  However, $\{x_1, x_3, x_4, u_1\}$ and $\{x_1, x_2, x_4, u_4\}$ are both forts, and $W$ cannot be a superset of either since we assumed it is a minimal fort, giving us that $u_1, u_4 \notin W$.    Hence, if $|X \cap W| = 4$, then $W$ is not a minimal fort.

Suppose $|X \cap W| =3$.  We can assume without loss of generality that either $X \cap W = \{x_1, x_2, x_3\}$ or $X \cap W = \{x_1, x_2, x_4\}$.  We start with $X \cap W = \{x_1, x_2, x_3\}$.  Then, by Observation  \ref{obs:closedneighborhoodnotfort}, $u_2 \notin W$.  Suppose $u_1, u_3 \notin W$.  If $u_4 \notin W$, then $V(G) \backslash W$ is a zero forcing set and similarly for $u_5$, so  $W = \{x_1, x_2, x_3, u_4, u_5\}$.  However, $\{x_1, x_3, u_4, u_5\}$ is a fort, so if $W$ is minimal, at least one of $u_1, u_3 \in W$.  Assume without loss of generality $u_1 \in W$.  If $u_4 \in W$, then $\{x_2, x_3, u_1, u_4\}$ is a subset of $W$ and a fort, so we must have $u_4 \notin W$, but then $V(G) \backslash W$ is a zero forcing set.   

Finally, suppose $X \cap W = \{x_1, x_2, x_4\}$.  Since we assume $W$ is a minimal fort, it cannot be a superset of any of the following forts: $W_1=\{x_1, x_2, u_3, u_5\}, W_2=\{x_1, x_2, u_4, x_4\}, W_3=\{x_2, x_4, u_1, u_5\},  W_4=\{x_1, x_4, u_2, u_3\}$.  Because of $W_2$, then $u_4 \notin W$.  And $W_1, W_3, W_4$ give us that one of the following pairs is not in  $W$: $S_1=\{u_1, u_3\}, S_2=\{u_3, u_5\}, S_3=\{u_2, u_5\}$. Note that because of symmetry, we need only consider $S_1, S_2$.  If $S_1\cap W = \emptyset$ or $S_2\cap W = \emptyset$, then we have $\{x_3, x_5, u_1, u_3, u_4\} \subseteq V(G) \backslash W$ or $\{x_3, x_5, u_3, u_4, u_5\} \subseteq V(G) \backslash W$ respectively, but each is a zero forcing set, so $W$ is not a minimal fort. 

Hence, all minimal forts of $G$ have four vertices.
\end{enumerate}
\end{proof}

\bibliography{wellfailedbib}
\bibliographystyle{plain}
\end{document}